\documentclass[a4paper,11pt]{article}

\usepackage[margin=1in]{geometry}
\usepackage{setspace}
\usepackage{amsmath}
\usepackage{amssymb}
\usepackage{amsthm}
\usepackage{xcolor}
\usepackage{hyperref}

\newtheorem{theorem}{Theorem}[section]
\newtheorem{proposition}[theorem]{Proposition}
\newtheorem{corollary}[theorem]{Corollary}
\newtheorem{lemma}[theorem]{Lemma}

\hypersetup{
    hidelinks,
    pdftitle={Spectral Estimates for Compact Riemann Surfaces via K\"ahler Potentials},
    pdfauthor={Hanwen Liu}
}

\begin{document}

\title{\texorpdfstring{\textbf{Spectral Estimates for Compact Riemann Surfaces via K\"ahler Potentials}}{Spectral Estimates for Compact Riemann Surfaces via K\"ahler Potentials}}
\author{Hanwen Liu\thanks{ORCID: 0009-0007-2503-9576}\\
{\small Mathematics Institute, University of Warwick}\thanks{University of Warwick, Coventry, United Kingdom, CV4 7AL}\\
{\small \href{mailto:hanwen.liu@warwick.ac.uk}{\texttt{hanwen.liu@warwick.ac.uk}}}}
\date{}
\maketitle

\begin{abstract}
We establish quantitative comparisons between the Laplace--Beltrami spectra of K\"ahler metrics of equal area on a compact Riemann surface. An estimate in terms of the oscillation of a potential gives bounds for fractional powers of reciprocal eigenvalues and explicit intervals for eigenvalue ratios. Bounds involving the gradient and Laplacian of the potential give further comparisons. On the Riemann sphere, we obtain estimates for individual eigenvalues, reciprocal sums and counting functions. For a K\"ahler metric on the unit 2-sphere which is centered in the sense that the unit normal vector field integrates to zero, the Dirichlet energy of the potential yields quantitative improvements of Hersch's bounds for the first positive eigenvalue and the sum of the first three reciprocal eigenvalues.
\end{abstract}

\begin{center}
\textbf{Keywords:} Laplace--Beltrami eigenvalues; compact Riemann surfaces; K\"ahler potentials.

\textbf{Mathematics Subject Classification:} Primary 58J50; Secondary 35P15, 53C55.
\end{center}

\tableofcontents
\onehalfspacing
\raggedbottom

\section{Introduction and Background}

The conformal invariance of the Dirichlet integral is a basic tool in the spectral geometry of compact surfaces. In 1970, Hersch~\cite{Hersch} proved sharp area bounds for the first positive eigenvalue on the sphere and the sum of the first three reciprocal eigenvalues. Using holomorphic maps to the sphere, Yang and Yau~\cite{YangYau} bounded the first positive eigenvalue on a compact Riemann surface in terms of area and genus. Karpukhin, Nadirashvili, Penskoi and Polterovich~\cite{KNPP} established the optimal area-normalized upper bound for every positive eigenvalue index on the sphere.

The present work compares two metrics in a fixed conformal class. Any two K\"ahler forms of equal area on a compact connected Riemann surface differ by $dd^c\phi$. The potential is unique up to a constant, so its oscillation is determined by the two forms. We use the oscillation and derivatives of the potential to estimate spectral changes.

Spectral comparison through a fixed energy form and varying measures has substantial precedents. Kokarev~\cite[Section~4.2]{Kokarev} studies continuity in an integral distance, while Girouard, Karpukhin and Lagac\'e~\cite[Section~4]{GKL} establish continuity under suitable dual Sobolev convergence. Burenkov, Gol'dshtein and Ukhlov~\cite{BGU} use weighted-form comparisons for Dirichlet spectra of conformally parametrized planar domains. Our principal estimate gives an explicit comparison at every positive index using only the oscillation of a Poisson potential, without requiring pointwise closeness of the area densities.

The main result is the ordered comparison in Theorem~\ref{thm:joint}. Its proof combines conformal invariance, integration by parts and the min--max principle. Multiplication by a positive function of the potential compares the Rayleigh quotients. Theorem~\ref{thm:p} bounds fractional powers of reciprocal eigenvalues, and Corollary~\ref{cor:interval} gives intervals for eigenvalue ratios. Theorem~\ref{thm:gradient} gives a further comparison in terms of the gradient of the potential. A bound for its Laplacian controls the ratio of the area forms. Pullback gives the oscillation comparison for metrics induced on holomorphically immersed curves.

On the Riemann sphere $\mathbb{P}^1$, the Fubini--Study reference spectrum and the classical sphere inequalities give bounds for individual eigenvalues, reciprocal sums and counting functions. For a K\"ahler metric on the unit 2-sphere which is centered in the sense that the unit normal vector field integrates to zero, Theorems~\ref{thm:sphere-energy} and~\ref{thm:reciprocal-energy} improve Hersch's inequalities by terms involving the Dirichlet energy of the potential. Both improvements are strict for a nonconstant potential. The first proof uses auxiliary Poisson equations; the second combines a spherical harmonic estimate with a finite rank comparison for the operator whose eigenvalues are the reciprocal Laplace eigenvalues.

The energy refinements belong to the quantitative stability theory of Hersch's inequality developed by Karpukhin, Nahon, Polterovich and Stern~\cite[Theorem~1.2]{KNPS}. They control a squared negative Sobolev distance from a round measure after a conformal automorphism. The contribution here is the fixed-area formula and a remainder for Hersch's three-term reciprocal sum; the relation between the norms and the explicit coefficients is discussed in Subsection~\ref{subsec:energy}.

\section{Comparisons in a Conformal Class}

As usual, we denote by $\sup(\psi)$ the supremum and $\sup(\psi)+\sup(-\psi)$ the oscillation of a real-valued function $\psi$.

Fix once and for all a compact connected Riemann surface $X$. All smooth functions on $X$ introduced below are real-valued. We use the convention
$$
dd^c=\sqrt{-1}\partial\bar\partial.
$$
For a K\"ahler form $\omega$ on $X$, we introduce the following notation.

The eigenvalues considered in this article are those of the nonnegative Laplacian $-\Delta_\omega$:
$$
0=\lambda_0(\omega)<\lambda_1(\omega)\le\lambda_2(\omega)\le\cdots,
$$
where every positive eigenvalue is counted with multiplicity. Suppose that $\omega_1,\omega_2$ are K\"ahler forms on $X$ such that there exists a non-constant $\phi\in C^\infty(X)$ satisfying
\begin{equation}\label{eq:setting}
\omega_2=\omega_1+dd^c\phi.
\end{equation}

The following observation identifies equation~$(\ref{eq:setting})$ with the equality of total areas.

\begin{lemma}
The area forms $\omega_1,\omega_2$ have equal integrals. Conversely, any two K\"ahler forms $\omega_1,\omega_2$ of equal total area on $X$ satisfy equation~$(\ref{eq:setting})$ for some $\phi\in C^\infty(X)$, unique up to an additive real constant.    
\end{lemma}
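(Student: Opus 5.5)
The first assertion is Stokes' theorem. Since $dd^c\phi=\sqrt{-1}\partial\bar\partial\phi$ is exact, we can write $dd^c\phi=d\bigl(\sqrt{-1}\,\bar\partial\phi\bigr)$. Indeed, $d=\partial+\bar\partial$ and $\bar\partial^2=0$ give $d(\sqrt{-1}\bar\partial\phi)=\sqrt{-1}\partial\bar\partial\phi$. Because $\phi$ is real, this also equals $\tfrac12 d\bigl(\sqrt{-1}(\bar\partial-\partial)\phi\bigr)$, which is $d$ of a real 1-form. Since $X$ is compact without boundary, we get $\int_X dd^c\phi=0$, and hence $\int_X\omega_2=\int_X\omega_1$.

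For the converse, we reduce to solving a Poisson equation. Both $\omega_1$ and $\omega_2$ are real $(1,1)$-forms, so their difference is a real smooth 2-form $\eta=\omega_2-\omega_1$ with $\int_X\eta=0$. Write $\eta=f\,\omega_1$ for a real $f\in C^\infty(X)$; then $\int_X f\,\omega_1=0$. In a local holomorphic coordinate $z$, write $\omega_1=\tfrac{\sqrt{-1}}{2}g\,dz\wedge d\bar z$. Then
$$dd^c\phi=\sqrt{-1}\,\phi_{z\bar z}\,dz\wedge d\bar z=\tfrac{\sqrt{-1}}{2}\cdot\tfrac{1}{2}\Delta_{\mathrm{euc}}\phi\,dz\wedge d\bar z.$$
Consequently $dd^c\phi=\tfrac12(\Delta_{\omega_1}\phi)\,\omega_1$, with $\Delta_{\omega_1}=g^{-1}\Delta_{\mathrm{euc}}$ the Laplace--Beltrami operator, which is conformally determined in dimension two. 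The exact constant depends on normalisation conventions and is irrelevant for the argument. The equation $\omega_2=\omega_1+dd^c\phi$ is therefore equivalent to $\Delta_{\omega_1}\phi=2f$.

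The key analytic input is the Fredholm alternative (Hodge theory) for $\Delta_{\omega_1}$ on the compact connected surface $X$. This operator is self-adjoint and elliptic, and its kernel consists of the constants by connectedness, since $\int_X|\nabla u|^2=0$ forces $u$ to be constant. Its image in $C^\infty$ is therefore exactly the $L^2(\omega_1)$-orthogonal complement of the constants, i.e. the functions with zero $\omega_1$-mean. Since $\int_X f\,\omega_1=0$, a smooth solution $\phi$ exists, and it is real because $f$ is real and the operator is real. Uniqueness up to an additive constant follows by the same kernel computation: if $dd^c(\phi-\phi')=0$, then $\phi-\phi'$ is harmonic and hence constant. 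The only step needing care is citing elliptic regularity and the Fredholm property. Everything else is bookkeeping of the constant in $dd^c$ versus $\Delta$.

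One remark on the statement: the lemma's converse allows $\phi$ to be constant, which occurs exactly when $\omega_1=\omega_2$. This is consistent with the standing hypothesis, which assumes $\phi$ is non-constant only for distinct forms.
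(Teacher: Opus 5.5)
Your proof is correct and follows the paper's argument. It uses Stokes' theorem for the equal areas, then writes $\omega_2-\omega_1=\rho\,\omega_1$, solves $\Delta_{\omega_1}\phi=2\rho$ by Hodge theory using the zero-mean condition, and applies $2dd^c\phi=(\Delta_{\omega_1}\phi)\omega_1$, with the local-coordinate check of this identity and the kernel argument for uniqueness that the paper leaves implicit.
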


\begin{proof}
The equality of areas follows from Stokes' theorem. Conversely, write $\omega_2-\omega_1=\rho\omega_1$. Since the $2$-form $\rho\omega_1$ has zero integral, by virtue of Hodge theory, we obtain a smooth solution of the Poisson equation $\Delta_{\omega_1}\phi=2\rho$, unique up to an additive real constant. The identity $2dd^c\phi=(\Delta_{\omega_1}\phi)\omega_1$ proves the assertion.
\end{proof}

We next express the conformal invariance of the Dirichlet energy. This allows the two spectral problems to be compared through their area forms.

Denote by $\langle-,-\rangle_i$ and $|\cdot|_i$ the inner product and norm on the tensor bundle of $X$ induced by $\omega_i$. For $u,v\in C^\infty(X)$, define the conformally invariant Dirichlet form by
$$
Q_i(u,v)=\int_X\langle du,dv\rangle_i\omega_i.
$$
For simplicity, we write $Q_i(u)=Q_i(u,u)$. Since the forms $\omega_1,\omega_2$ are conformal, we have that $Q_1=Q_2$, and we write $Q:=Q_1=Q_2$. 

Integration by parts gives
\begin{equation}\label{eq:ibp}
\int_X udd^c\phi=-\frac12Q(\phi,u).
\end{equation}

\subsection{Oscillation of potentials}

We first compare corresponding eigenvalues in a prescribed order, while interchanging the forms removes the ordering hypothesis. The following estimate is the basis of subsequent results.

\begin{theorem}\label{thm:joint}
For every $j\ge1$ such that $\lambda_j(\omega_2)\geq\lambda_j(\omega_1)$, it holds that
\begin{equation}\label{eq:ordered}
\frac{1}{\sqrt{\lambda_j(\omega_1)}}\left(\frac{1}{\sqrt{\lambda_j(\omega_1)}}-\frac{1}{\sqrt{\lambda_j(\omega_2)}}\right)\leq\frac{\sup(\phi)+\sup(-\phi)}{2}.
\end{equation}
\end{theorem}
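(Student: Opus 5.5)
The plan is to compare Rayleigh quotients through the min--max principle, exploiting $Q_1=Q_2=Q$, so that only the $L^2$ masses $\int_X u^2\omega_1$ and $\int_X u^2\omega_2$ differ. Write $a=\lambda_j(\omega_1)$, $b=\lambda_j(\omega_2)$ with $b\ge a$, and $s=\tfrac12(\sup\phi+\sup(-\phi))$. After replacing $\phi$ by $\phi-c$ for a suitable constant (first try the midpoint of the range, so $|\phi|\le s$), inequality~(\ref{eq:ordered}) reads $\frac1a-\frac1{\sqrt{ab}}\le s$. This needs a \emph{lower} bound on $a$ in terms of $b$. I therefore take as test space for $\omega_2$ the span $W$ of the first $j+1$ eigenfunctions of $-\Delta_{\omega_1}$, including the constants, and estimate
$$
b\le\max_{u\in W\setminus\{0\}}\frac{Q(u)}{\int_X u^2\omega_2}.
$$
On $W$ one has spectral control $Q(u)\le a\|u\|_1^2$ and $\|\Delta_{\omega_1}u\|_1^2\le aQ(u)$, where $\|\cdot\|_1$ is the $L^2(\omega_1)$ norm.

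The key identity comes from $\omega_2=\omega_1+dd^c\phi$ together with~(\ref{eq:ibp}) applied to $u^2$, or equivalently moving $dd^c$ onto $u^2$:
$$
\int_X u^2\omega_2=\int_X u^2\omega_1+\int_X\phi\,dd^c(u^2)=\|u\|_1^2+\int_X\phi\bigl(u\Delta_{\omega_1}u+|du|_1^2\bigr)\omega_1 .
$$
The term $\int\phi|du|_1^2\omega_1$ has a sign once the constant is chosen at an endpoint of the range of $\phi$. The term $\int\phi\, u\Delta_{\omega_1}u\,\omega_1$ is bounded by Cauchy--Schwarz by $\sup|\phi|\,\|u\|_1\sqrt{aQ(u)}$. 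Setting $x=\sqrt{Q(u)/\|u\|_1^2}\in[0,\sqrt a]$, the Rayleigh quotient is then bounded by an explicit increasing function of $x$. Evaluating at $x=\sqrt a$ and inverting gives a relation between $1/a$ and $1/b$.

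The main obstacle is sharpness. The naive version of this computation yields only $\frac1a-\frac1b\le 2s$. That is weaker than~(\ref{eq:ordered}), since
$$
\tfrac1a-\tfrac1{\sqrt{ab}}=\tfrac12\bigl(\tfrac1a-\tfrac1b\bigr)+\tfrac12\bigl(\tfrac1{\sqrt a}-\tfrac1{\sqrt b}\bigr)^2 .
$$
The geometric mean $\sqrt{ab}$ suggests that the Cauchy--Schwarz step should pair an $\omega_1$-quantity with an $\omega_2$-quantity rather than using $a$ twice. Following the introduction's hint (multiplication by a positive function of the potential), I would first try weighted test functions $v=e^{t\phi}u$ or $f(\phi)u$ with $u\in W$, and optimize in $t$. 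The $|d\phi|$ terms produced by differentiating the weight should combine, via~(\ref{eq:ibp}), with the $dd^c\phi$ term from the change of measure. The aim is that only $\sup\phi$ and $\sup(-\phi)$ survive, each with coefficient $\tfrac12$.

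If that fails, the alternative is to run the argument symmetrically. I would use the mixed bilinear quantity $\int_X u\,v\,dd^c\phi$ with $u$ an $\omega_1$-eigenfunction and $v$ an $\omega_2$-eigenfunction, whose gradients are controlled by $\sqrt a$ and $\sqrt b$ respectively. I would then apply min--max to the $(j+1)$-dimensional spaces on both sides, so that the cross term is bounded by $s\sqrt{ab}$ times the product of the masses. This is precisely the structure needed to produce $\frac1a-\frac1{\sqrt{ab}}\le s$.
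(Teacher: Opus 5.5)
Your proposal has a genuine gap. The only estimate you carry out gives $\lambda_j(\omega_1)^{-1}-\lambda_j(\omega_2)^{-1}\le L$, where $L:=\sup(\phi)+\sup(-\phi)$. As your own identity shows, this is strictly weaker than (\ref{eq:ordered}): when $\lambda_j(\omega_2)\gg\lambda_j(\omega_1)$ it loses a factor $2$. The rest of the proposal lists strategies without fixing the weight or performing the estimate, so the step that produces the factor $\tfrac12$ and the geometric mean is missing.

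The exponential weight you suggest first does not obviously supply it. If you expand $Q(e^{t\phi}u)$, the $|d\phi|_1^2$ term has a favorable sign and can be dropped. However, comparing $\int_X e^{2t\phi}|du|_1^2\omega_1$ with $\int_X e^{2t\phi}u^2\omega_1$ on the eigenspace costs the weight ratio $e^{2|t|L}$. Optimizing in $t$ then bounds $\lambda_j(\omega_2)$ only when $\lambda_j(\omega_1)^{-1}\ge eL$, whereas (\ref{eq:ordered}) gives a bound as soon as $\lambda_j(\omega_1)^{-1}>L/2$.

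The symmetric alternative is unsubstantiated. Moving $dd^c$ from $\phi$ onto $uv$ also produces terms proportional to $\lambda_j(\omega_1)$ and to $\lambda_j(\omega_2)$ separately, not only to $\sqrt{\lambda_j(\omega_1)\lambda_j(\omega_2)}$. You also do not say how a single pair of eigenfunctions would enter a min--max argument.

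The idea you need is to \emph{divide} the $\omega_1$-eigenfunctions by a weight that is affine in $\phi$, with its additive constant tied to $\lambda:=\lambda_j(\omega_1)$. If $\lambda^{-1}\le L/2$, then (\ref{eq:ordered}) is trivial. Otherwise put $\varphi:=\lambda^{-1}+\tfrac12(\phi-\sup(\phi))$, so that $0<\lambda^{-1}-L/2\le\varphi\le\lambda^{-1}$. Take test functions $v=w/\varphi$, with $w$ in the span $V$ of the first $j+1$ eigenfunctions of $-\Delta_{\omega_1}$.

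Since $d\varphi=\tfrac12 d\phi$, equation (\ref{eq:ibp}) gives the exact identity $Q(\varphi v)=\int_X\varphi^2|dv|_1^2\omega_1+\int_X\varphi v^2(\omega_1-\omega_2)$. It has no $|d\phi|^2$ remainder and no second derivatives of $v$. Together with $Q(\varphi v)\le\lambda\int_X\varphi^2v^2\omega_1$ and $\lambda\varphi\le1$, this yields
\[
(\lambda^{-1}-L/2)^2Q(v)\le\int_X\varphi(\lambda\varphi-1)v^2\omega_1+\int_X\varphi v^2\omega_2\le\lambda^{-1}\int_Xv^2\omega_2 .
\]
Choosing the constant $\lambda^{-1}$ is exactly what makes the $\omega_1$-term nonpositive. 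Min--max on the $(j+1)$-dimensional space $\varphi^{-1}V$ gives $\lambda_j(\omega_2)\le\lambda^{-1}/(\lambda^{-1}-L/2)^2$, which rearranges to (\ref{eq:ordered}). The geometric mean you anticipated arises because the Dirichlet energy is weighted by $(\min\varphi)^2$ and the $\omega_2$-mass by $\max\varphi$.
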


\begin{proof}
\binoppenalty=10000\relpenalty=10000
Put $a:=1/\lambda_j(\omega_1)$ and $L:=\sup(\phi)+\sup(-\phi)$. If $a\leq L/2$, then the assertion is immediate. We therefore assume that $a>L/2$, and set $\varphi:=a+(\phi-\sup(\phi))/2$. Thus, we have that $0<a-L/2\leq\varphi\leq a$. By equation~$(\ref{eq:ibp})$, we obtain that
\begin{equation}\label{eq:weighted-energy}
Q(\varphi v)=\int_X\varphi^2|dv|_1^2\omega_1+\int_X\varphi v^2(\omega_1-\omega_2).
\end{equation}
Let $V$ be the linear span of the first $j+1$ elements of an $L^2$-orthonormal eigenbasis of $-\Delta_{\omega_1}$, ordered by eigenvalue. For every $\varphi v\in V$, we have that
\begin{equation}\label{eq:reference-subspace}
Q(\varphi v)\leq\frac1a\int_X\varphi^2v^2\omega_1.
\end{equation}
Combining equations~$(\ref{eq:weighted-energy})$ and~$(\ref{eq:reference-subspace})$, we obtain that
\begin{equation}\label{eq:weighted-rayleigh}
(a-L/2)^2Q(v)\leq\int_X\varphi(\varphi/a-1)v^2\omega_1+\int_X\varphi v^2\omega_2\leq a\int_Xv^2\omega_2.
\end{equation}
Since the space $\varphi^{-1}V$ has dimension $j+1$, the min--max principle applied to equation~$(\ref{eq:weighted-rayleigh})$ gives $\lambda_j(\omega_2)\leq a/(a-L/2)^2$. Rearranging this inequality proves equation~$(\ref{eq:ordered})$.
\end{proof}

The estimate above also controls fractional powers of reciprocal eigenvalues.

\begin{theorem}\label{thm:p}
For every real $p\geq1$ and every integer $j\geq1$, it holds that
\begin{equation}\label{eq:allp}
\frac{1}{2}\left|\frac{1}{\lambda_j(\omega_1)^{1/p}}-\frac{1}{\lambda_j(\omega_2)^{1/p}}\right|^p
\le\frac{\sup(\phi)+\sup(-\phi)}{p+2-|p-2|}.
\end{equation}
\end{theorem}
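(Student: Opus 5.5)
The plan is to deduce Theorem~\ref{thm:p} directly from Theorem~\ref{thm:joint} by an elementary inequality in two real variables. First I remove the ordering hypothesis. Equation~$(\ref{eq:setting})$ is symmetric under interchanging $\omega_1,\omega_2$ and replacing $\phi$ by $-\phi$, and this leaves the oscillation $L:=\sup(\phi)+\sup(-\phi)$ unchanged. So, for a fixed $j\ge1$, I may assume $\lambda_j(\omega_2)\ge\lambda_j(\omega_1)$.

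Set $x:=\lambda_j(\omega_1)^{-1/2}$ and $y:=\lambda_j(\omega_2)^{-1/2}$, so that $x\ge y>0$. Theorem~\ref{thm:joint} gives $x(x-y)\le L/2$. The left side of~$(\ref{eq:allp})$ is $\tfrac12\bigl(x^{2/p}-y^{2/p}\bigr)^p$. The denominator $p+2-|p-2|$ equals $4$ when $p\ge2$ and $2p$ when $1\le p\le2$. It therefore suffices to prove
\[
\bigl(x^{2/p}-y^{2/p}\bigr)^p\le c_p\,x(x-y),\qquad c_p=\begin{cases}1,&p\ge2,\\ 2/p,&1\le p\le2,\end{cases}
\]
because multiplying $x(x-y)\le L/2$ by $c_p/2$ then yields the claim.

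Both sides of this inequality are homogeneous of degree $2$ in $(x,y)$. I normalize $x=1$ and write $t:=y/x\in(0,1]$ and $q:=2/p\in(0,2]$, which reduces the task to showing $(1-t^q)^p\le c_p(1-t)$. Since $0\le 1-t^q\le1$ and $p\ge1$, we have $(1-t^q)^p\le 1-t^q$, so it remains to bound $1-t^q$.

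\begin{itemize}
\item If $p\ge2$, then $q\le1$, so $t^q\ge t$ on $[0,1]$ and hence $1-t^q\le 1-t$.
\item If $1\le p\le2$, then $q\in[1,2]$. Convexity of $t\mapsto t^q$ gives the tangent-line bound $t^q\ge1+q(t-1)$, that is, $1-t^q\le q(1-t)=(2/p)(1-t)$.
\end{itemize}

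I do not expect a genuine obstacle. The only points needing care are the symmetry reduction (checking that the oscillation is invariant under $\phi\mapsto-\phi$) and correctly identifying the two regimes of the constant $p+2-|p-2|$. The rest is the convexity bound above.
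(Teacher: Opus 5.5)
Your proposal is correct and follows essentially the same route as the paper: after the same symmetry reduction, you use Theorem~\ref{thm:joint} in the form $x(x-y)\le L/2$, which is the paper's $2a(1-r)\le L$ with $r=y/x$. You then bound $(1-r^{2/p})^p$ by $1-r$ for $p\ge2$ and by $\frac{2}{p}(1-r)$ for $1\le p\le 2$, exactly as the paper does; your tangent-line (convexity) justification of the latter bound is a small detail the paper leaves implicit.
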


\begin{proof}
\binoppenalty=10000\relpenalty=10000
By interchanging the forms and replacing $\phi$ by $-\phi$ if necessary, we may assume that $a:=1/\lambda_j(\omega_1)\geq b:=1/\lambda_j(\omega_2)$. Put $r:=\sqrt{b/a}$ and $L:=\sup(\phi)+\sup(-\phi)$. By equation~$(\ref{eq:ordered})$, we have that $2a(1-r)\leq L$. For $1\leq p\leq2$, we obtain
\begin{equation}\label{eq:fractional-elementary}
(1-r^{2/p})^p\leq1-r^{2/p}\leq\frac2p(1-r).
\end{equation}
For $p\geq2$, we have that $r^{2/p}\geq r$, and hence we obtain $$(1-r^{2/p})^p\leq(1-r)^p\leq1-r.$$ Together with equation~$(\ref{eq:fractional-elementary})$, this gives
\begin{equation}\label{eq:fractional-difference}
|a^{1/p}-b^{1/p}|^p=a(1-r^{2/p})^p\leq\frac{2a(1-r)}{\min\{p,2\}}\leq\frac{L}{\min\{p,2\}}.
\end{equation}
Since $p+2-|p-2|=2\min\{p,2\}$, equation~$(\ref{eq:fractional-difference})$ proves $(\ref{eq:allp})$.
\end{proof}

For comparison with a fixed reference metric, it is useful to solve the ordered estimate for the eigenvalue ratio.

\begin{corollary}\label{cor:interval}
For $j\ge1$ and $L:=\sup(\phi)+\sup(-\phi)$, it holds that
\begin{equation}\label{eq:interval}
\max\{0,2-\lambda_j(\omega_2)L\}^2
\le4\frac{\lambda_j(\omega_2)}{\lambda_j(\omega_1)}
\le\left(1+\sqrt{1+2\lambda_j(\omega_2)L}\right)^2.
\end{equation}
\end{corollary}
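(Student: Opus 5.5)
We apply Theorem~\ref{thm:joint} twice, once in each direction, and solve the resulting quadratic inequalities in the variable $t:=\sqrt{\lambda_j(\omega_2)/\lambda_j(\omega_1)}$. Write $\mu:=\lambda_j(\omega_2)$, so that $4\lambda_j(\omega_2)/\lambda_j(\omega_1)=4t^2$. Since $\phi$ and $-\phi$ have the same oscillation $L$, and $\omega_1=\omega_2+dd^c(-\phi)$, the theorem also holds with the roles of the forms interchanged. We split into the cases $t\ge1$ and $t\le1$.

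\emph{Upper bound.} If $t\le1$, then $4t^2\le4\le(1+\sqrt{1+2\mu L})^2$ and nothing remains to prove. If $t\ge1$, then $\lambda_j(\omega_2)\ge\lambda_j(\omega_1)$. Multiplying equation~$(\ref{eq:ordered})$ by $\mu$ and using $1/\sqrt{\lambda_j(\omega_1)}=t/\sqrt{\mu}$ turns it into
$$t(t-1)\le\frac{\mu L}{2},\qquad\text{that is,}\qquad t^2-t-\frac{\mu L}{2}\le0.$$
Hence $t$ is at most the larger root $\bigl(1+\sqrt{1+2\mu L}\bigr)/2$. Squaring and multiplying by $4$ gives the right-hand inequality of~$(\ref{eq:interval})$.

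\emph{Lower bound.} If $t\ge1$, then $4t^2\ge4\ge\max\{0,2-\mu L\}^2$, since $\mu L\ge0$. If $t\le1$, then $\lambda_j(\omega_1)\ge\lambda_j(\omega_2)$, and we apply Theorem~\ref{thm:joint} with the forms interchanged:
$$\frac{1}{\sqrt{\mu}}\left(\frac{1}{\sqrt{\mu}}-\frac{1}{\sqrt{\lambda_j(\omega_1)}}\right)\le\frac L2.$$
Multiplying by $\mu$ gives $1-t\le\mu L/2$, that is, $2t\ge2-\mu L$. Since also $t\ge0$, we get $2t\ge\max\{0,2-\mu L\}$, and squaring yields the left-hand inequality.

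The argument is routine. The only points that need care are the direction of the rearrangement in each case and the observation that interchanging the forms is legitimate, because $\phi\mapsto-\phi$ preserves the oscillation. The step I would check most carefully is the algebra that converts~$(\ref{eq:ordered})$ into a quadratic in $t$ normalized by $\mu=\lambda_j(\omega_2)$ in both directions. This normalization is the one that makes both bounds come out in terms of $\lambda_j(\omega_2)L$, as stated.
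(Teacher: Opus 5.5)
Your proof is correct and follows the paper's argument essentially verbatim. The paper likewise sets $r=\sqrt{\lambda_j(\omega_2)/\lambda_j(\omega_1)}$ and $s=\lambda_j(\omega_2)L/2$, obtains $r(r-1)\le s$ for $r\ge1$ and $1-r\le s$ for $r\le1$ from Theorem~\ref{thm:joint} applied in the appropriate order, and then solves the two inequalities and squares; you spell out the trivial half of each bound in the opposite case, which the paper leaves implicit in ``in either case.''
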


\begin{proof}
\binoppenalty=10000\relpenalty=10000
Put $r:=\sqrt{\lambda_j(\omega_2)/\lambda_j(\omega_1)}$ and $s:=\lambda_j(\omega_2)L/2$. If $r\geq1$, then equation~$(\ref{eq:ordered})$ gives $r(r-1)\leq s$. If $r\leq1$, then equation~$(\ref{eq:ordered})$, with the forms interchanged, gives $1-r\leq s$. In either case, we obtain that
\begin{equation}\label{eq:ratio-root}
\max\{0,1-s\}\leq r\leq\frac{1+\sqrt{1+4s}}2.
\end{equation}
Squaring equation~$(\ref{eq:ratio-root})$ and multiplying by $4$ proves equation~$(\ref{eq:interval})$.
\end{proof}

The same comparison applies to metrics induced on holomorphically immersed curves. The right-hand sides can be bounded using the ambient potential alone.

\begin{theorem}
Let $M$ be a compact complex manifold, and let $f\colon X\rightarrow M$ be a holomorphic immersion. Suppose that $\Omega_1,\Omega_2$ are K\"ahler forms on $M$ such that there exists $\Phi\in C^\infty(M)$ satisfying 
\begin{equation}\label{eq:setting2}
\Omega_2=\Omega_1+dd^c\Phi.
\end{equation}
Then, for every real $p\geq1$ and every integer $j\geq1$ such that $\lambda_j(f^*\Omega_2)\geq\lambda_j(f^*\Omega_1)$, it holds that
$$
\frac{1}{\sqrt{\lambda_j(f^*\Omega_1)}}\left(\frac{1}{\sqrt{\lambda_j(f^*\Omega_1)}}-\frac{1}{\sqrt{\lambda_j(f^*\Omega_2)}}\right)\leq\frac{\sup(\Phi)+\sup(-\Phi)}{2},
$$
and that 
$$
\frac{1}{2}\left|\frac{1}{\lambda_j(f^*\Omega_1)^{1/p}}-\frac{1}{\lambda_j(f^*\Omega_2)^{1/p}}\right|^p
\le\frac{\sup(\Phi)+\sup(-\Phi)}{p+2-|p-2|}.
$$
\end{theorem}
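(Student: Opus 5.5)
The plan is to pull everything back to $X$ and then apply Theorem~\ref{thm:joint} and Theorem~\ref{thm:p} to the induced forms. Set $\omega_i:=f^*\Omega_i$ for $i=1,2$. Since $f$ is a holomorphic immersion and $\Omega_i$ is a Kähler form on $M$, each $\omega_i$ is a closed real $(1,1)$-form on $X$. It is positive: for a nonzero tangent vector $v$ we have $df(v)\neq0$, so $\omega_i(v,Jv)=\Omega_i(df\,v,J\,df\,v)>0$. Hence $\omega_1,\omega_2$ are Kähler forms on $X$. Pullback commutes with $\partial$ and $\bar\partial$ because $f$ is holomorphic. So $f^*dd^c\Phi=dd^c(\Phi\circ f)$, and pulling back equation~(\ref{eq:setting2}) gives
$$
\omega_2=\omega_1+dd^c\phi,\qquad \phi:=\Phi\circ f\in C^\infty(X),
$$
which is equation~(\ref{eq:setting}).

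Next I compare oscillations. Since $\phi$ takes values in the image of $\Phi$, we have $\sup(\phi)\le\sup(\Phi)$ and $\sup(-\phi)\le\sup(-\Phi)$. Therefore
$$
\sup(\phi)+\sup(-\phi)\le\sup(\Phi)+\sup(-\Phi).
$$
Both asserted inequalities then follow from equation~(\ref{eq:ordered}) and equation~(\ref{eq:allp}) applied to $\omega_1,\omega_2$ on $X$, with their right-hand sides enlarged via this comparison.

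The one real obstacle is that Theorems~\ref{thm:joint} and~\ref{thm:p} are stated for a non-constant potential, while $\Phi\circ f$ may be constant, for example when $f(X)$ lies in a level set of $\Phi$. If $\phi$ is constant, then $dd^c\phi=0$ and $\omega_1=\omega_2$. The eigenvalues then coincide, so the left-hand sides of both inequalities vanish and the right-hand sides are nonnegative. Alternatively, the proofs of Theorem~\ref{thm:joint} and Theorem~\ref{thm:p} never used non-constancy: with $L=0$ the argument still runs and gives equality. Either way the constant case is handled, which completes the plan.
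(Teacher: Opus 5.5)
Your proposal is correct and follows essentially the same route as the paper's proof. Both pull back to obtain $f^*\Omega_2=f^*\Omega_1+dd^c(\Phi\circ f)$, bound the oscillation of $\Phi\circ f$ by that of $\Phi$, treat the case of constant $\Phi\circ f$ separately (the forms then coincide), and otherwise invoke Theorems~\ref{thm:joint} and~\ref{thm:p}.
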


\begin{proof}
Since the map $f$ is a holomorphic immersion, the pullback forms $f^*\Omega_1,f^*\Omega_2$ are K\"ahler. Pulling back equation~$(\ref{eq:setting2})$, we obtain that $f^*\Omega_2=f^*\Omega_1+dd^c(\Phi\circ f)$.
Moreover, we have that $\sup(\Phi\circ f)+\sup(-\Phi\circ f)\leq\sup(\Phi)+\sup(-\Phi)$. If $\Phi\circ f$ is constant, then the two pullback forms coincide and both assertions are immediate. Otherwise, the assertions follow from Theorems~\ref{thm:joint} and~\ref{thm:p}.
\end{proof}

\subsection{Derivative estimates}

We now consider the first derivatives of the potential. Integration by parts and the min--max principle give the following comparison of reciprocal eigenvalues.

\begin{theorem}\label{thm:gradient}
For every $j\geq1$, the inequality
\begin{equation}\label{eq:gradient-strong}
\left|\frac{1}{\lambda_j(\omega_1)}-\frac{1}{\lambda_j(\omega_2)}\right|\leq\min\left\{\frac{\sup|d\phi|_1}{\sqrt{\lambda_j(\omega_1)}},\frac{\sup|d\phi|_2}{\sqrt{\lambda_j(\omega_2)}}\right\}
\end{equation}
holds. Consequently, it holds that
\begin{equation}\label{eq:gradient-root}
\left|\frac{1}{\sqrt{\lambda_j(\omega_1)}}-\frac{1}{\sqrt{\lambda_j(\omega_2)}}\right|
\leq\frac{\sup|d\phi|_1\sup|d\phi|_2}{\sup|d\phi|_1+\sup|d\phi|_2}.
\end{equation}
\end{theorem}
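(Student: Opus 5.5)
The plan is to prove each one-sided bound in \eqref{eq:gradient-strong} by the min--max principle, testing the $\omega_2$-Rayleigh quotient on subspaces adapted to $\omega_1$; the roles of the forms are then interchanged. The key identity comes from \eqref{eq:setting} and \eqref{eq:ibp} with $u^2$ in place of $u$. Since $d(u^2)=2u\,du$, it gives
$$
\int_X u^2\omega_2=\int_X u^2\omega_1-\int_X u\langle d\phi,du\rangle_1\,\omega_1 .
$$
Put $G_1:=\sup|d\phi|_1$, $\|u\|^2:=\int_X u^2\omega_1$ and $t:=Q(u)/\|u\|^2$. Cauchy--Schwarz then yields
$$
\|u\|^2\bigl(1-G_1\sqrt t\bigr)\le\int_X u^2\omega_2\le\|u\|^2\bigl(1+G_1\sqrt t\bigr).
$$
Throughout, write $\lambda:=\lambda_j(\omega_1)$.

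\emph{Upper bound for $1/\lambda_j(\omega_1)-1/\lambda_j(\omega_2)$.} If $G_1\sqrt{\lambda}\ge1$, there is nothing to prove, since the left side is at most $1/\lambda\le G_1/\sqrt\lambda$. Otherwise, take $V$ to be the span of the first $j+1$ eigenfunctions of $-\Delta_{\omega_1}$, so that $t\le\lambda$ on $V\setminus\{0\}$. This gives
$$
\frac{Q(u)}{\int_X u^2\omega_2}\le\frac{t}{1-G_1\sqrt t}\le\frac{\lambda}{1-G_1\sqrt\lambda},
$$
using that $t\mapsto t/(1-G_1\sqrt t)$ is increasing. Min--max then gives $1/\lambda_j(\omega_2)\ge 1/\lambda-G_1/\sqrt\lambda$.

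\emph{Upper bound for $1/\lambda_j(\omega_2)-1/\lambda_j(\omega_1)$.} This direction should be the only genuinely delicate step, because it cannot be obtained from a $(j+1)$-dimensional test space. I would use the max--min form of Courant--Fischer: $\lambda_j(\omega_2)$ is at least the infimum of $Q(u)/\int_X u^2\omega_2$ over the codimension-$j$ subspace $W=\{u:\int_X u\,e_k\,\omega_1=0,\ 0\le k<j\}$, where the $e_k$ are the $\omega_1$-eigenfunctions. On $W$ we have $t\ge\lambda$. Since $t\mapsto t/(1+G_1\sqrt t)$ is increasing,
$$
\frac{Q(u)}{\int_X u^2\omega_2}\ge\frac{t}{1+G_1\sqrt t}\ge\frac{\lambda}{1+G_1\sqrt\lambda},
$$
that is, $1/\lambda_j(\omega_2)\le1/\lambda+G_1/\sqrt\lambda$. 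Together with the first direction, this proves the $G_1$ bound. Interchanging $\omega_1,\omega_2$ and replacing $\phi$ by $-\phi$ gives the bound with $G_2:=\sup|d\phi|_2$ and $\lambda_j(\omega_2)$, hence \eqref{eq:gradient-strong}.

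For \eqref{eq:gradient-root}, set $x=1/\sqrt{\lambda_j(\omega_1)}$ and $y=1/\sqrt{\lambda_j(\omega_2)}$. Then \eqref{eq:gradient-strong} reads $|x-y|(x+y)\le G_1x$ and $|x-y|(x+y)\le G_2y$. Rewriting these as $|x-y|/G_1\le x/(x+y)$ and $|x-y|/G_2\le y/(x+y)$ and adding, we get $|x-y|(1/G_1+1/G_2)\le1$, which is \eqref{eq:gradient-root}. Here $G_1,G_2>0$ because $\phi$ is nonconstant.
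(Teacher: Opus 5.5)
Your proof is correct and follows essentially the paper's route: the same identity from $(\ref{eq:ibp})$ with Cauchy--Schwarz, min--max on the span of the first $j+1$ eigenfunctions of $-\Delta_{\omega_1}$ for one direction, interchange of the forms, and elementary algebra for $(\ref{eq:gradient-root})$. The differences are cosmetic. For the lower bound on $\lambda_j(\omega_2)$, the paper observes that $Q(u)/\int_Xu^2\omega_2\geq t/(1+G_1\sqrt t)$ holds for every $u$ and, using monotonicity, feeds this into min--max over all $(j+1)$-dimensional subspaces, where you instead invoke the max--min form on the codimension-$j$ complement. For $(\ref{eq:gradient-root})$, the paper uses a case split where you add the two rescaled inequalities.
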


\begin{proof}
\binoppenalty=10000\relpenalty=10000
Put $a:=\sup|d\phi|_1$ and $\lambda:=\lambda_j(\omega_1)$. For $i=1,2$, define
$$
E_i(u):=\int_Xu^2\omega_i.
$$
By equation~$(\ref{eq:ibp})$ and the Cauchy--Schwarz inequality, we obtain that
\begin{equation}\label{eq:gradient-mass-difference}
|E_2(u)-E_1(u)|=\left|\int_Xu\langle d\phi,du\rangle_1\omega_1\right|\leq a\sqrt{E_1(u)Q(u)}.
\end{equation}
For $u\neq0$, put $R_i(u):=Q(u)/E_i(u)$. Equation~$(\ref{eq:gradient-mass-difference})$ gives $$R_2(u)\geq \frac{R_1(u)}{1+a\sqrt{R_1(u)}}.$$ The function $s/(1+a\sqrt{s})$ is increasing for $s\geq0$. By the min--max principle, we therefore obtain that $\lambda_j(\omega_2)\geq\lambda/(1+a\sqrt{\lambda})$.

If $a\sqrt{\lambda}<1$, then equation~$(\ref{eq:gradient-mass-difference})$ on the span of the first $j+1$ eigenfunctions of $-\Delta_{\omega_1}$ gives $E_2(u)\geq(1-a\sqrt{\lambda})E_1(u)$. The min--max principle therefore gives $\lambda_j(\omega_2)\leq\lambda/(1-a\sqrt{\lambda})$. If $a\sqrt{\lambda}\geq1$, then the quantity $1/\lambda-a/\sqrt{\lambda}$ is nonpositive. In either case, we obtain that
\begin{equation}\label{eq:gradient-reciprocal-interval}
\frac1\lambda-\frac{a}{\sqrt\lambda}\leq\frac1{\lambda_j(\omega_2)}\leq\frac1\lambda+\frac{a}{\sqrt\lambda}.
\end{equation}
Interchanging the forms in equation~$(\ref{eq:gradient-reciprocal-interval})$ proves equation~$(\ref{eq:gradient-strong})$.

Finally, put $b:=\sup|d\phi|_2$. Also write $c:=1/\sqrt{\lambda_j(\omega_1)}$ and $d:=1/\sqrt{\lambda_j(\omega_2)}$. By equation~$(\ref{eq:gradient-strong})$ and the elementary bound for the minimum, we have that
\begin{equation}\label{eq:gradient-root-algebra}
|c-d|=\frac{|c^2-d^2|}{c+d}\leq\frac{\min\{ac,bd\}}{c+d}\leq\frac{ab}{a+b}.
\end{equation}
For the last inequality, if $ac\leq bd$, then we have that $ac(a+b)\leq ab(c+d)$; the other case is analogous. Thus, equation~$(\ref{eq:gradient-root-algebra})$ proves equation~$(\ref{eq:gradient-root})$.
\end{proof}

A bound for the Laplacian of the potential gives a pointwise comparison of the area forms. Applying the min--max principle to this comparison gives the following estimate for eigenvalue ratios.

\begin{proposition}
Suppose that $\sup(-\Delta_{\omega_1}\phi)<2$. Then, for every $j\geq1$, it holds that
\begin{equation}\label{eq:density-comparison}
\frac{\lambda_j(\omega_1)}{2+\sup(\Delta_{\omega_1}\phi)}\leq\frac{\lambda_j(\omega_2)}{2}\leq\frac{\lambda_j(\omega_1)}{2-\sup(-\Delta_{\omega_1}\phi)}.
\end{equation}
In particular, if $\|\Delta_{\omega_1}\phi\|_\infty\leq2\varepsilon<2$, then, for every $j\geq1$, it holds that
\begin{equation}\label{eq:second-order}
\frac1{1+\varepsilon}\leq\frac{\lambda_j(\omega_2)}{\lambda_j(\omega_1)}\leq\frac1{1-\varepsilon}.
\end{equation}
\end{proposition}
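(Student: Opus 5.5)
We compare the two area forms pointwise and then apply the min--max principle, using that the Dirichlet form $Q$ is the same for both metrics. From the proof of the Lemma we have the identity $2dd^c\phi=(\Delta_{\omega_1}\phi)\omega_1$. Hence equation~$(\ref{eq:setting})$ reads
$$
\omega_2=\Bigl(1+\tfrac12\Delta_{\omega_1}\phi\Bigr)\omega_1 .
$$
Put $m:=\inf(\Delta_{\omega_1}\phi)=-\sup(-\Delta_{\omega_1}\phi)$ and $M:=\sup(\Delta_{\omega_1}\phi)$. Then we have the pointwise bounds
$$
\frac{2+m}{2}\,\omega_1\le\omega_2\le\frac{2+M}{2}\,\omega_1 .
$$
The hypothesis $\sup(-\Delta_{\omega_1}\phi)<2$ says exactly that $2+m>0$. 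This positivity is what makes the lower comparison nondegenerate; it is in any case automatic, since $\omega_2$ is K\"ahler.

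For every nonzero $u\in C^\infty(X)$, integrating $u^2$ against these bounds gives
$$
\frac{2+m}{2}\,E_1(u)\le E_2(u)\le\frac{2+M}{2}\,E_1(u),
\qquad E_i(u):=\int_X u^2\omega_i .
$$
Since $Q_1=Q_2=Q$, the Rayleigh quotients $R_i(u)=Q(u)/E_i(u)$ therefore satisfy
$$
\frac{2}{2+M}\,R_1(u)\le R_2(u)\le\frac{2}{2+m}\,R_1(u).
$$
We then apply the min--max characterization $\lambda_j(\omega_i)=\min_{\dim V=j+1}\max_{u\in V\setminus\{0\}}R_i(u)$. Because both bounds are multiplications by positive constants, they pass through the min and the max. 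This gives
$$
\frac{2\lambda_j(\omega_1)}{2+M}\le\lambda_j(\omega_2)\le\frac{2\lambda_j(\omega_1)}{2+m}.
$$
Dividing by $2$ yields exactly equation~$(\ref{eq:density-comparison})$.

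For equation~$(\ref{eq:second-order})$, the assumption $\|\Delta_{\omega_1}\phi\|_\infty\le2\varepsilon$ gives $M\le2\varepsilon$ and $-m\le2\varepsilon<2$. The first part therefore applies, and it yields
$$
\frac{1}{1+\varepsilon}\le\frac{2}{2+M}\le\frac{\lambda_j(\omega_2)}{\lambda_j(\omega_1)}\le\frac{2}{2+m}\le\frac{1}{1-\varepsilon}.
$$
No step is a real obstacle. The only points needing care are the normalization of the identity $2dd^c\phi=(\Delta_{\omega_1}\phi)\omega_1$, which produces the factor $\tfrac12$, and the observation that the min--max principle respects multiplication of the Rayleigh quotient by positive constants.
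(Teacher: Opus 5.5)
Your proposal is correct and follows the paper's proof essentially verbatim: the pointwise identity $2\omega_2=(2+\Delta_{\omega_1}\phi)\omega_1$ gives a two-sided comparison of the $L^2$ masses, and the min--max principle with the common Dirichlet form $Q$ finishes the argument. Your side remark is also right: positivity of $\omega_2$ on the compact surface $X$ already forces $\sup(-\Delta_{\omega_1}\phi)<2$.
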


\begin{proof}
\binoppenalty=10000\relpenalty=10000
By equation~$(\ref{eq:setting})$, we have that $2\omega_2=(2+\Delta_{\omega_1}\phi)\omega_1$. Consequently, for every smooth function $u\in C^\infty(X)$, we obtain that
\begin{equation}\label{eq:density-mass}
\bigl(2-\sup(-\Delta_{\omega_1}\phi)\bigr)\int_Xu^2\omega_1\leq2\int_Xu^2\omega_2\leq\bigl(2+\sup(\Delta_{\omega_1}\phi)\bigr)\int_Xu^2\omega_1.
\end{equation}
By the conformal invariance of $Q$, the min--max principle applied to equation~$(\ref{eq:density-mass})$ immediately proves $(\ref{eq:density-comparison})$.

If $\|\Delta_{\omega_1}\phi\|_\infty\leq2\varepsilon<2$, then both suprema in equation~$(\ref{eq:density-comparison})$ are at most $2\varepsilon$. Substituting these bounds gives equation~$(\ref{eq:second-order})$.
\end{proof}

\section{Quantitative Bounds for the Sphere}

We now use the explicit spectrum of the round sphere as a reference. Let $\hat\omega$ be the Fubini--Study metric on $\mathbb{P}^1$ normalized by
$$
\int_{\mathbb{P}^1}\hat\omega=2\pi.
$$
For simplicity, we denote by $\Delta$ the Laplace--Beltrami operator of $(\mathbb{P}^1,\hat\omega)$. We also denote by $\langle-,-\rangle$ and $|\cdot|$ the inner product and norm on the tensor bundle of $\mathbb{S}^2$ induced by $\hat\omega$.

The Gaussian curvature of $(\mathbb{P}^1,\hat\omega)$ is $2$, and the spectrum of $-\Delta$ is
\begin{equation}\label{eq:round}
\lambda_j(\hat\omega)=2k(k+1)
\end{equation}
whenever $k^2\le j\le(k+1)^2-1$ and $k\ge1$.
Thus, the first positive Laplace–Beltrami eigenvalue of $\hat\omega$ is $4$ with multiplicity $3$, followed by $12$ with multiplicity $5$.

Take a smooth function $\phi\in C^\infty(\mathbb{S}^2)$ such that
$$
\omega:=\hat\omega+dd^c\phi>0, \qquad L:=\sup(\phi)+\sup(-\phi)>0.
$$
\subsection{Comparison with the round spectrum}

The next theorem combines Corollary~\ref{cor:interval} with Hersch's inequalities. The upper bound for the first positive eigenvalue and the lower bound for the reciprocal sum are classical; the opposite bounds depend on the oscillation of the potential.

\begin{theorem}\label{thm:line}
The Laplace–Beltrami spectrum of $\omega$ satisfies
\begin{equation}\label{eq:first}
4\geq\lambda_1(\omega)\geq\frac{16}{(1+\sqrt{1+8L})^2}
\end{equation}
and
\begin{equation}\label{eq:sum}
\frac14\le \frac 13\sum_{j=1}^{3}\frac{1}{\lambda_j(\omega)}\le\frac{(1+\sqrt{1+8L})^2}{16}.
\end{equation}
\end{theorem}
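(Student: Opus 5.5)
The plan is to obtain the two classical bounds from Hersch's inequalities and the two oscillation-dependent bounds from Corollary~\ref{cor:interval}, using the round metric $\hat\omega$ as the reference form.

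\emph{Classical bounds.} By the Lemma of Section~2, $\omega$ and $\hat\omega$ have the same total area, namely $2\pi$. Hersch's inequality $\lambda_1(\omega)\operatorname{Area}(\omega)\le 8\pi$ then gives $\lambda_1(\omega)\le 4$. His reciprocal-sum inequality
$$
\sum_{j=1}^3\frac{1}{\lambda_j(\omega)}\ge\frac{3\operatorname{Area}(\omega)}{8\pi}
$$
gives $\frac13\sum_{j=1}^3 1/\lambda_j(\omega)\ge\frac14$. These are the left-hand bounds in (\ref{eq:first}) and (\ref{eq:sum}).

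\emph{Oscillation bounds.} Apply Corollary~\ref{cor:interval} with $\omega_1:=\omega$ and $\omega_2:=\hat\omega$. Then $\hat\omega=\omega+dd^c(-\phi)$, and the potential $-\phi$ is nonconstant because $L>0$. Its oscillation equals that of $\phi$, namely $L$. The right-hand inequality of (\ref{eq:interval}) reads
$$
4\frac{\lambda_j(\hat\omega)}{\lambda_j(\omega)}\le\Bigl(1+\sqrt{1+2\lambda_j(\hat\omega)L}\Bigr)^2 .
$$
For $j=1,2,3$, equation~(\ref{eq:round}) gives $\lambda_j(\hat\omega)=4$. Hence
$$
\frac{1}{\lambda_j(\omega)}\le\frac{(1+\sqrt{1+8L})^2}{16},\qquad j=1,2,3.
$$
For $j=1$ this is the lower bound in (\ref{eq:first}). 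Averaging the three inequalities gives the upper bound in (\ref{eq:sum}).

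The argument involves no real obstacle. The only points to check are the direction of the comparison and the role of the reference metric. We must take the round metric as $\omega_2$, so that the known eigenvalue $\lambda_j(\hat\omega)=4$ appears on the right-hand side of (\ref{eq:interval}). We must also confirm that Hersch's inequalities are applied with the area normalization $\int\omega=2\pi$.
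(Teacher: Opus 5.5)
Your proposal is correct and follows the paper's proof exactly. You take the classical bounds from Hersch's inequalities with total area $2\pi$, and the oscillation bounds from Corollary~\ref{cor:interval} with $\omega_1=\omega$, $\omega_2=\hat\omega$ and $\lambda_j(\hat\omega)=4$ for $j=1,2,3$; your check that $-\phi$ is nonconstant with oscillation $L$ is a detail the paper leaves implicit.
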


\begin{proof}
\binoppenalty=10000\relpenalty=10000
The area of $\omega$ is $2\pi$. Hersch's inequalities~\cite{Hersch} therefore give $\lambda_1(\omega)\leq4$ and
\begin{equation}\label{eq:hersch-reciprocal}
\sum_{j=1}^{3}\frac1{\lambda_j(\omega)}\geq\frac34.
\end{equation}
For $j=1,2,3$, equation~$(\ref{eq:round})$ gives $\lambda_j(\hat\omega)=4$. Applying equation~$(\ref{eq:interval})$ with $\omega_1=\omega$ and $\omega_2=\hat\omega$, we obtain that
\begin{equation}\label{eq:round-reciprocal-upper}
\frac1{\lambda_j(\omega)}\leq\frac{(1+\sqrt{1+8L})^2}{16}.
\end{equation}
The case $j=1$ of equation~$(\ref{eq:round-reciprocal-upper})$ proves the lower bound in equation~$(\ref{eq:first})$. Summing equation~$(\ref{eq:round-reciprocal-upper})$ over $j=1,2,3$ and using equation~$(\ref{eq:hersch-reciprocal})$ proves equation~$(\ref{eq:sum})$.
\end{proof}

The reciprocal sum also constrains the second positive eigenvalue once the first is controlled. Combining this observation with the classical upper bound gives the following interval.

\begin{theorem}\label{thm:second}
It holds that
\begin{equation}\label{eq:second}
\frac{16}{(1+\sqrt{1+8L})^2}\le\lambda_2(\omega)
\le\frac{16}{\max\{2,\,5-4L-\sqrt{1+8L}\}}.
\end{equation}
\end{theorem}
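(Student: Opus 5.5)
The lower bound is immediate: $\lambda_2(\omega)\geq\lambda_1(\omega)$, and Theorem~\ref{thm:line} gives $\lambda_1(\omega)\geq 16/(1+\sqrt{1+8L})^2$.

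For the upper bound, set $x_j:=1/\lambda_j(\omega)$ for $j=1,2,3$, so that $x_1\geq x_2\geq x_3$. The aim is a lower bound on $x_2$ of the form $x_2\geq\max\{2,\,5-4L-\sqrt{1+8L}\}/16$. There are two ingredients.

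\textbf{First branch.} Hersch's upper bound $\lambda_1(\omega)\leq4$ gives $x_1\geq1/4$. This alone does not bound $x_2$. For the constant $2/16=1/8$, I would invoke the Karpukhin--Nadirashvili--Penskoi--Polterovich bound for $\lambda_2$ on the sphere: $\lambda_2\cdot\mathrm{Area}\leq 16\pi$ (the Nadirashvili bound). With area $2\pi$ this gives $\lambda_2(\omega)\leq 8=16/2$. So the $2$ in the maximum comes from this classical result, cited via~\cite{KNPP}.

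\textbf{Second branch.} Here I combine the Hersch reciprocal sum~(\ref{eq:hersch-reciprocal}), $x_1+x_2+x_3\geq3/4$, with the oscillation upper bound~(\ref{eq:round-reciprocal-upper}), $x_1\leq U:=(1+\sqrt{1+8L})^2/16$. Since $x_3\leq x_2$, we get
$$
2x_2\geq x_2+x_3\geq\tfrac34-x_1\geq\tfrac34-U.
$$
Now expand: $(1+\sqrt{1+8L})^2=2+8L+2\sqrt{1+8L}$, so
$$
\tfrac34-U=\frac{12-2-8L-2\sqrt{1+8L}}{16}=\frac{2\,(5-4L-\sqrt{1+8L})}{16}.
$$
Hence $x_2\geq(5-4L-\sqrt{1+8L})/16$, which inverts to $\lambda_2(\omega)\leq16/(5-4L-\sqrt{1+8L})$ whenever the denominator is positive. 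Taking the better of the two branches gives the maximum in the denominator of~(\ref{eq:second}), and since the maximum is at least $2$ the quotient is always well defined.

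The main obstacle is justifying the constant $2$. If the paper instead intends it to come from Hersch alone, that fails, so I would rely on~\cite{KNPP} (the case $k=2$, $\Lambda_2=16\pi$). The rest is routine algebra.
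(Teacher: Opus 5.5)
Your proposal is correct and matches the paper's proof step for step. You get the lower bound from $\lambda_2(\omega)\ge\lambda_1(\omega)$ and Theorem~\ref{thm:line}. You get the upper bound from Hersch's reciprocal sum, the oscillation bound on $1/\lambda_1(\omega)$ and $\lambda_3(\omega)\ge\lambda_2(\omega)$, which gives $2/\lambda_2(\omega)\ge(5-4L-\sqrt{1+8L})/8$; the constant $2$ comes, exactly as in the paper, from the Karpukhin--Nadirashvili--Penskoi--Polterovich bound $\lambda_2(\omega)\cdot 2\pi\le16\pi$.
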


\begin{proof}
\binoppenalty=10000\relpenalty=10000
The lower bound follows from equation~$(\ref{eq:first})$ and $\lambda_2(\omega)\geq\lambda_1(\omega)$. Since $\lambda_3(\omega)\geq\lambda_2(\omega)$, equations~$(\ref{eq:hersch-reciprocal})$ and~$(\ref{eq:round-reciprocal-upper})$ give
\begin{equation}\label{eq:second-reciprocal-lower}
\frac2{\lambda_2(\omega)}\geq\frac34-\frac1{\lambda_1(\omega)}\geq\frac{5-4L-\sqrt{1+8L}}8.
\end{equation}
The classical sphere bound
\begin{equation}\label{eq:second-area-upper}
\lambda_2(\omega)\int_{\mathbb S^2}\omega\leq16\pi
\end{equation}
gives $\lambda_2(\omega)\leq8$; see~\cite[Theorem 1.2]{KNPP}. Combining equations~$(\ref{eq:second-reciprocal-lower})$ and~$(\ref{eq:second-area-upper})$ gives equation~$(\ref{eq:second})$.
\end{proof}

We next apply the comparison simultaneously to all positive eigenvalues and express the resulting bounds through the counting function.

We recall the definition of the counting function
$$
N_\omega(t):=\#\{j\ge1:\lambda_j(\omega)\le t\}
$$
which counts positive Laplace–Beltrami eigenvalues of $\omega$ with multiplicity. The counting function for the round sphere is explicitly known as
$$
N(t):=k(t)(k(t)+2),
$$
where $k(t)=\lfloor(\sqrt{1+2t}-1)/2\rfloor$.

\begin{theorem}\label{thm:count}
For every $t>0$, it holds that
\begin{equation}\label{eq:countlower}
N_\omega(t)\ge N(4(1+\sqrt{1+2tL})^{-2}t).
\end{equation}
For $0<t<2/L$, it also holds that
\begin{equation}\label{eq:countupper}
N_\omega(t)\le
N((1-tL/2)^{-2}t).
\end{equation}
\end{theorem}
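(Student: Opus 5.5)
We will derive both inequalities from the ordered comparison in Theorem~\ref{thm:joint}, applied index by index with $\hat\omega$ and $\omega$ in the two roles, and then translate the eigenvalue bounds into bounds on counting functions. The potential relating the two forms is $\phi$ (or $-\phi$), and both have oscillation $L$.

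For the lower bound~(\ref{eq:countlower}), fix $t>0$ and an index $j$ with $\lambda_j(\omega)\le t$. We want to show that every $j$ with $\lambda_j(\hat\omega)\le s:=4(1+\sqrt{1+2tL})^{-2}t$ also satisfies $\lambda_j(\omega)\le t$. Suppose instead that $\lambda_j(\omega)>t$. Since $\lambda_j(\hat\omega)\le s<t$, we have $\lambda_j(\omega)\ge\lambda_j(\hat\omega)$. Theorem~\ref{thm:joint}, with $\omega_1=\hat\omega$ and $\omega_2=\omega$, then gives, in terms of $\mu=\lambda_j(\hat\omega)$ and $\lambda=\lambda_j(\omega)$,
\[
\frac{1}{\sqrt{\mu}}\left(\frac1{\sqrt\mu}-\frac1{\sqrt\lambda}\right)\le\frac L2.
\]
The left side is decreasing in $\mu$ and increasing in $\lambda$. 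Setting $\lambda=t$ and $\mu=s$, we put $x=1/\sqrt s$. The condition $x^2-x/\sqrt t\le L/2$ holds with equality exactly at $x=\bigl(1/\sqrt t+\sqrt{1/t+2L}\bigr)/2$, that is, at $\sqrt s=2\sqrt t/(1+\sqrt{1+2tL})$, which matches the stated $s$. So $\lambda>t$ and $\mu\le s$ yield strict inequality $>L/2$ in the display, a contradiction. Hence $\{j:\lambda_j(\hat\omega)\le s\}\subseteq\{j:\lambda_j(\omega)\le t\}$, and therefore $N_\omega(t)\ge N(s)$.

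For the upper bound~(\ref{eq:countupper}), assume $0<t<2/L$ and put $s:=(1-tL/2)^{-2}t$. Let $j$ satisfy $\lambda_j(\omega)\le t$; we need $\lambda_j(\hat\omega)\le s$. If $\lambda_j(\hat\omega)\le\lambda_j(\omega)$, this is immediate, since $s\ge t$. Otherwise, Theorem~\ref{thm:joint} applies with $\omega_1=\omega$, $\omega_2=\hat\omega$ and $\mu=\lambda_j(\hat\omega)>\lambda=\lambda_j(\omega)$. It gives $1/\lambda-1/\sqrt{\lambda\mu}\le L/2$, which rearranges to
\[
\frac1{\sqrt\mu}\ge\frac{1-\lambda L/2}{\sqrt\lambda}.
\]
The right side is decreasing in $\lambda$ and positive for $\lambda\le t<2/L$. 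Hence $1/\sqrt\mu\ge(1-tL/2)/\sqrt t$, that is, $\mu\le s$. This shows $\{j:\lambda_j(\omega)\le t\}\subseteq\{j:\lambda_j(\hat\omega)\le s\}$, and so $N_\omega(t)\le N(s)$.

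The main point of care is the monotonicity bookkeeping. The monotone functions must be placed in the right direction, so that we can pass from $\lambda_j(\omega)\le t$ to the threshold $t$. The case split on which eigenvalue is larger is also required, because Theorem~\ref{thm:joint} is an ordered estimate. Finally, $N$ is evaluated through the explicit round spectrum~(\ref{eq:round}), which is already encoded in the formula for $N(t)$, so no further computation is needed.
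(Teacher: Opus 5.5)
Your argument is correct and is essentially the paper's proof. The paper first packages the two ordered applications of Theorem~\ref{thm:joint} as Corollary~\ref{cor:interval}, giving $\lambda_j(\omega)\ge c(\lambda_j(\hat\omega))$ and $\lambda_j(\hat\omega)\ge c(\lambda_j(\omega))$ with $c(t)=4t/(1+\sqrt{1+2tL})^2$, and then uses that $c$ is strictly increasing with inverse $t/(1-tL/2)^2$ on $(0,2/L)$; this is exactly your case split and monotonicity bookkeeping done inline. The one imprecision is that $\mu\mapsto 1/\mu-1/\sqrt{\mu\lambda}$ is decreasing only for $\mu<4\lambda$, not globally, but this covers the range $\mu\le s<t<\lambda$ where you use it.
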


\begin{proof}
\binoppenalty=10000\relpenalty=10000
Put $c(t):=4t/(1+\sqrt{1+2tL})^2$. By equation~$(\ref{eq:interval})$, applied in both orders, we have that
\begin{equation}\label{eq:count-comparison}
\lambda_j(\omega)\geq c(\lambda_j(\hat\omega)),\qquad \lambda_j(\hat\omega)\geq c(\lambda_j(\omega)).
\end{equation}
The function $c$ is strictly increasing, and its inverse on $(0,2/L)$ is $c^{-1}(t)=t/(1-tL/2)^2$. If $\lambda_j(\hat\omega)\leq c(t)$, then the second inequality in equation~$(\ref{eq:count-comparison})$ gives $\lambda_j(\omega)\leq t$. Counting these indices proves equation~$(\ref{eq:countlower})$. If $\lambda_j(\omega)\leq t<2/L$, then the first inequality in equation~$(\ref{eq:count-comparison})$ gives $\lambda_j(\hat\omega)\leq c^{-1}(t)$. Counting these indices proves equation~$(\ref{eq:countupper})$.
\end{proof}

\subsection{Energy refinements of Hersch's inequalities}\label{subsec:energy}

We write $G_\omega$ for the Green operator of $-\Delta_\omega$ on the space of square integrable functions of zero mean integral on the K\"ahler manifold $(\mathbb{P}^1,\omega)$. The eigenvalues of $G_\omega$, listed in decreasing order, are 
$$\lambda_1(\omega)^{-1}\geq\lambda_2(\omega)^{-1}\geq\cdots.$$

\begin{proposition}\label{prop:matrix}
Let $f_1,f_2,f_3$ be linearly independent smooth functions with zero mean integral on the K\"ahler manifold $(\mathbb{P}^1,\omega)$.
Then, it holds that
\begin{equation}\label{eq:matrix-sum}
\sum_{j=1}^{3}\frac1{\lambda_j(\omega)}\geq\operatorname{tr}(A^{-1}B),
\end{equation}
where the matrices $A,B$ are given by
$$
A_{ij}:=\int_{\mathbb S^2}f_if_j\omega,
\qquad
B_{ij}:=Q(G_\omega f_i,G_\omega f_j),
$$
so that $A$ is positive definite.
Moreover, if $f_1^2+f_2^2+f_3^2=1$, then
\begin{equation}\label{eq:matrix-trace}
\lambda_1(\omega)\leq\frac{1}{\operatorname{tr}(B)}\int_{\mathbb{S}^2}\omega.
\end{equation}
\end{proposition}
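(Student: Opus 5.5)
Both inequalities will follow from Ky Fan's maximum principle for the compact, self-adjoint, nonnegative operator $G_\omega$ on the space $H$ of square-integrable functions with zero $\omega$-mean. The eigenvalues of $G_\omega$ in decreasing order are $\lambda_j(\omega)^{-1}$. We use $\langle\cdot,\cdot\rangle_\omega$ for the $L^2(\omega)$ pairing.

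\textbf{Step 1: the quadratic form of $G_\omega$.} Since $-\Delta_\omega G_\omega f=f$ and $G_\omega f$ has zero mean, integration by parts gives
$$
Q(G_\omega f,G_\omega g)=\int_{\mathbb S^2}(-\Delta_\omega G_\omega f)\,G_\omega g\,\omega=\langle f,G_\omega g\rangle_\omega .
$$
Thus $B_{ij}=\langle f_i,G_\omega f_j\rangle_\omega$ is the Gram matrix of $G_\omega$ restricted to $W:=\operatorname{span}\{f_1,f_2,f_3\}$. Likewise $A$ is the Gram matrix of the $L^2(\omega)$ inner product on $W$. By linear independence, $A$ is positive definite.

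\textbf{Step 2: proof of (\ref{eq:matrix-sum}).} Write $A=C^{\mathsf T}C$ with $C$ invertible. Set $g_k:=\sum_i (C^{-1})_{ik}f_i$, so that $g_1,g_2,g_3$ is an $L^2(\omega)$-orthonormal basis of $W$. Ky Fan's principle gives
$$
\sum_{k=1}^3\langle g_k,G_\omega g_k\rangle_\omega\le\sum_{j=1}^3\lambda_j(\omega)^{-1}.
$$
The left-hand side equals $\operatorname{tr}(C^{-\mathsf T}BC^{-1})=\operatorname{tr}(A^{-1}B)$.

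Ky Fan's principle follows from the min--max principle, or from expanding each $g_k$ in an eigenbasis and using the fact that the squared coefficients of an orthonormal $3$-frame sum to at most $1$ in each eigendirection and to $3$ in total. This is the only genuinely nontrivial input.

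\textbf{Step 3: proof of (\ref{eq:matrix-trace}).} For each single $f_i$, the Rayleigh characterization of the top eigenvalue of $G_\omega$ on mean-zero functions gives
$$
B_{ii}=\langle f_i,G_\omega f_i\rangle_\omega\le\lambda_1(\omega)^{-1}\int_{\mathbb S^2} f_i^2\,\omega .
$$
Summing over $i$ and using $f_1^2+f_2^2+f_3^2=1$ yields
$$
\operatorname{tr}(B)\le\lambda_1(\omega)^{-1}\int_{\mathbb S^2}\omega .
$$
Rearranging gives (\ref{eq:matrix-trace}), since $\operatorname{tr}(B)>0$: each $f_i$ is nonzero and $G_\omega$ is positive definite on $H$. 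The step for $\lambda_1$ alone does not use linear independence, only the zero-mean condition.

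The main point to check is the identification in Step 1. In particular, $G_\omega f$ must be normalized to have zero $\omega$-mean; otherwise constants could enter $Q$. They do not, because $Q$ kills constants and the pairing with a mean-zero $f$ kills constants too.
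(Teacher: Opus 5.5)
Your proof is correct and follows the paper's argument: you orthonormalize the $f_i$ using a factor $C^{-1}$ with $A=C^{\mathsf T}C$ (the paper uses $\sqrt{A}^{-1}$) and apply the min--max (Ky Fan) principle for $G_\omega$ on their span; for the second part you bound each $B_{ii}$ by the top eigenvalue of $G_\omega$ and sum using $f_1^2+f_2^2+f_3^2=1$. Your check that $\operatorname{tr}(B)>0$ is a detail the paper leaves implicit; it only needs some $f_i\neq0$, which already follows from $f_1^2+f_2^2+f_3^2=1$ without using linear independence.
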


\begin{proof}
\binoppenalty=10000\relpenalty=10000
Write $\sqrt{A}$ for the positive definite square root of $A$. For $i=1,2,3$, put
$$
h_i:=\sum_{j=1}^{3}(\sqrt{A}^{-1})_{ij}f_j.
$$
The functions $h_1,h_2,h_3$ are orthonormal for the $L^2$ inner product induced by $\omega$ and have zero mean integral. Integration by parts and the min--max principle for $G_\omega$ on their span give
$$
\operatorname{tr}(A^{-1}B)=\sum_{i=1}^{3}\int_{\mathbb S^2}h_iG_\omega h_i\omega\leq\sum_{j=1}^{3}\frac1{\lambda_j(\omega)}.
$$
This proves equation~$(\ref{eq:matrix-sum})$.

Moreover, the largest eigenvalue of $G_\omega$ is $1/\lambda_1(\omega)$. Thus, for $i=1,2,3$, we have that
$$
B_{ii}=\int_{\mathbb S^2}f_iG_\omega f_i\omega\leq\frac1{\lambda_1(\omega)}\int_{\mathbb S^2}f_i^2\omega.
$$
Summing over $i=1,2,3$ and using $f_1^2+f_2^2+f_3^2=1$, we obtain equation~$(\ref{eq:matrix-trace})$.
\end{proof}

The Dirichlet energy of the potential is a negative Sobolev quantity: equation~$(\ref{eq:ibp})$ and the Cauchy--Schwarz inequality give
$$
Q(\phi)=\sup\left\{
4\left|\textstyle\int_{\mathbb S^2}f(\omega-\hat\omega)\right|^2:f\in C^\infty(\mathbb S^2),Q(f)=1\right\}
$$
The right-hand side is four times the squared homogeneous $H^{-1}$ norm of the area-form difference, while equal area removes the ambiguity of constants. The estimate in~\cite{KNPS} also has an explicit constant, but uses an inhomogeneous $W^{-1,2}$ norm and an eigenvalue normalization. Here Theorem~\ref{thm:sphere-energy} gives $$\frac{1}{\lambda_1(\omega)}\geq\frac14+\frac{Q(\phi)}{8\pi},$$ and Theorem~\ref{thm:reciprocal-energy} gives the coefficient $40\pi$ for the average of three reciprocal eigenvalues. This collective estimate does not follow from the first-eigenvalue bound alone. The finite rank comparison itself is a standard variational argument; the energy remainders are the additional estimates.

We finally use the Dirichlet energy of the potential to refine Hersch's inequalities. The next two results impose a centering condition, which removes the degree-$1$ component of the potential.

\begin{theorem}\label{thm:sphere-energy}
Let $\vec{n}$ be the outward-pointing unit normal vector field of the unit $2$-sphere $\mathbb{S}^2:=\{(x_1,x_2,x_3)\in\mathbb{R}^3:x_1^2+x_2^2+x_3^2=1\}$. Suppose that $\omega$ is centered, in the sense that 
$$\int_{\mathbb{S}^2}\vec{n}\omega=0.$$
 Then, it holds that
$$
\lambda_1(\omega)\leq\frac{8\pi}{2\pi+Q(\phi)}<4.
$$
\end{theorem}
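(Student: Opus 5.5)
The plan is to apply the trace bound~$(\ref{eq:matrix-trace})$ of Proposition~\ref{prop:matrix} with the coordinate functions $f_i:=x_i$ for $i=1,2,3$. They are linearly independent and satisfy $f_1^2+f_2^2+f_3^2=1$. The centering hypothesis $\int_{\mathbb S^2}\vec n\,\omega=0$ says exactly that each $x_i$ has zero mean integral with respect to $\omega$. Since $\int_{\mathbb S^2}\omega=2\pi$, the proposition gives $\lambda_1(\omega)\le 2\pi/\operatorname{tr}(B)$. It therefore suffices to prove
\[
\operatorname{tr}(B)=\sum_{i=1}^3 Q(u_i)\ \ge\ \frac{2\pi+Q(\phi)}{4},\qquad u_i:=G_\omega x_i .
\]

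The auxiliary Poisson equations come in here. By conformal invariance, $u_i$ is characterized by $Q(u_i,v)=\int v\,x_i\,\omega$ for all $v$. Because $\hat\omega$ is half the standard area form, we have $-\Delta x_i=4x_i$. Hence $w_i:=x_i/4$ satisfies $Q(w_i,v)=\int v\,x_i\,\hat\omega$. Put $e_i:=u_i-w_i$; then by~$(\ref{eq:ibp})$,
\[
Q(e_i,v)=\int_{\mathbb S^2} v\,x_i\,(\omega-\hat\omega)=-\tfrac12 Q(\phi,v\,x_i).
\]
Expanding gives $Q(u_i)=Q(w_i)+2Q(e_i,w_i)+Q(e_i)$, and I will evaluate the three sums in turn.
\begin{itemize}
\item The first term gives $\sum_i Q(w_i)=\tfrac14\int\sum_i x_i^2\,\hat\omega=\tfrac{2\pi}{4}$.
\item The cross term gives $2\sum_i Q(e_i,w_i)=\tfrac12\int\sum_i x_i^2\,(\omega-\hat\omega)=0$, by equality of areas.
\end{itemize}
Hence $\operatorname{tr}(B)=\tfrac{2\pi}{4}+\sum_i Q(e_i)$. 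This already yields $\lambda_1(\omega)\le4$, and equality forces every $e_i=0$.

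The remaining and main step is the energy remainder $\sum_i Q(e_i)\ge Q(\phi)/4$. I would use the duality
\[
\sum_i Q(e_i)\ \ge\ \frac{\bigl(\sum_i Q(e_i,v_i)\bigr)^2}{\sum_i Q(v_i)}
\]
for a well-chosen triple $(v_i)$. The natural first try is $v_i=x_i\phi$. Since $\sum_i x_i^2=1$, the numerator becomes $\bigl(\tfrac12 Q(\phi)\bigr)^2$. Since $\sum_i x_i\,dx_i=0$ and $\sum_i|dx_i|^2=4$, the denominator is $Q(\phi)+4\int\phi^2\hat\omega$. This gives only $Q(\phi)^2/\bigl(4Q(\phi)+16\int\phi^2\hat\omega\bigr)$, which is weaker than needed because of the zero-order term. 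I therefore expect the obstacle to be removing this term. I would first normalize $\phi$ to the most favorable additive constant. I would then exploit the freedom to modify the test functions by pieces in the span of the $x_i$ and constants, in the spirit of the remark that centering removes the degree-$1$ component of the potential. If that fails, I would instead solve auxiliary Poisson equations $-\Delta v_i=x_i\,\Delta\phi$ (corrected to have zero mean) on the round sphere. I would then bound $\sum_i Q(v_i)$ directly in terms of $Q(\phi)$ using spherical harmonic expansions, where $\sum_i x_i^2=1$ and the absence of a degree-$1$ component of $\phi$ give the needed cancellation.

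Finally, strict inequality $8\pi/(2\pi+Q(\phi))<4$ holds because $\phi$ is nonconstant, so $Q(\phi)>0$.
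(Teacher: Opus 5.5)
Your reduction is correct and matches the paper's. The trace bound of Proposition~\ref{prop:matrix} with $f_i=x_i$ is exactly the paper's Poincar\'e step, the paper's $e_i$ is your $u_i=G_\omega x_i$ up to a constant, and $\operatorname{tr}(B)=\frac{\pi}{2}+\sum_iQ(e_i)$ is right. The gap is that the whole content of the theorem, $\sum_iQ(e_i)\ge Q(\phi)/4$, is left unproved, and neither repair you list supplies it.

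The first repair cannot work. Take $\phi$ a spherical harmonic of degree $\ell\ge3$; it is centered, and $\omega>0$ after scaling. Since $x_ix_j$ has only degrees $0$ and $2$, you get $Q(e_i,x_j)=-\ell(\ell+1)\int x_ix_j\phi\,\hat\omega=0$ and $Q(x_i\phi,x_j)=0$. Constants are invisible to $Q$, and zero mean is already the best additive normalization. So the best lower bound obtainable from triples in $x_i\phi+\operatorname{span}\{1,x_1,x_2,x_3\}$ is $\frac{\ell(\ell+1)}{\ell(\ell+1)+2}\cdot\frac{Q(\phi)}{4}<\frac{Q(\phi)}{4}$.

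The second repair is circular. Under centering, $x_i\Delta\phi$ already has zero mean and $-\Delta e_i=\frac12x_i\Delta\phi$, so $v_i=2e_i$ up to constants. The numerator is then $\frac12\sum_iQ(v_i)$, so the duality collapses to an identity. What you would need is the lower bound $\sum_iQ(v_i)\ge Q(\phi)$, which is the claim itself.

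The missing idea is a closed formula for $e_i$. Let $v$ solve $(\Delta+4)v=-\phi$ with $v$ orthogonal to degrees $0$ and $1$. From $\operatorname{Hess}(x_i)=-2x_i\hat g$ and $\operatorname{Ric}(\hat g)=2\hat g$ one gets $\Delta\langle dx_i,dv\rangle=\langle dx_i,d\Delta v\rangle-4x_i\Delta v$. Hence $e_i=\frac12x_i\Delta v-\langle dx_i,dv\rangle$ up to a constant. Note the gradient term, which your test functions lack.

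Take this representative. Then $\sum_ix_i^2=1$ and $\sum_ix_i\,dx_i=0$ give $\sum_ix_ie_i=\frac12\Delta v$. Therefore
$\sum_iQ(e_i)=\frac14\int\Delta v\,\Delta\phi\,\hat\omega=\frac14Q(\phi)+Q(\phi,v)$.
Finally $Q(\phi,v)\ge8Q(v)\ge0$, because $-\Delta-4\ge8$ on degrees at least $2$.

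Alternatively, you could finish along your harmonic line. The operator $\sum_i x_i(-\Delta)^{-1}x_i$, with $x_i$ acting by multiplication, commutes with rotations. So it acts on degree-$\ell$ harmonics as the scalar $1/(2(\ell+2)(\ell-1))$ for $\ell\ge2$. This yields $\sum_iQ(e_i)=\frac14\sum_{\ell\ge2}\frac{\ell(\ell+1)}{\ell(\ell+1)-2}Q(\phi[\ell])\ge\frac14Q(\phi)$. Some computation of this kind is indispensable, and it is absent from your proposal.
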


\begin{proof}
\binoppenalty=10000\relpenalty=10000
Write $n_1,n_2,n_3$ for the three component functions of $\vec n$. Orthogonality in this proof is with respect to the $L^2$ inner product induced by $\hat\omega$. After adding a constant to $\phi$, we may assume that
$$
\int_{\mathbb S^2}\phi\hat\omega=0.
$$
Since $\Delta n_i=-4n_i$, the centering condition and integration by parts give
\begin{equation}\label{eq:sphere-orthogonality}
0=\int_{\mathbb S^2}n_i\omega=-2\int_{\mathbb S^2}n_i\phi\hat\omega.
\end{equation}
The functions $n_1,n_2,n_3$ span the first eigenspace of $-\Delta$. Thus, by equation~$(\ref{eq:round})$, there exists a unique smooth function $v$, orthogonal to the constants and this eigenspace, such that
\begin{equation}\label{eq:sphere-v}
(\Delta+4)v=-\phi.
\end{equation}
Indeed, the operator $-\Delta-4$ has eigenvalues at least $8$ on this orthogonal complement. The same spectral decomposition gives
\begin{equation}\label{eq:sphere-positive}
Q(\phi,v)=\int_{\mathbb S^2}(\Delta v)^2\hat\omega-4Q(v)\geq8Q(v)\geq0.
\end{equation}

We next construct functions whose Poisson equations have the centered coordinate functions as their right-hand sides.

For $i=1,2,3$, put
\begin{equation}\label{eq:sphere-test}
e_i:=\frac14n_i+\frac12n_i\Delta v-\langle dn_i,dv\rangle.
\end{equation}
Write $\hat g$ for the Riemannian metric induced by $\hat\omega$. The identities $\operatorname{Hess}(n_i)=-2n_i\hat g$ and $\operatorname{Ric}(\hat g)=2\hat g$ give, by the product rule,
\begin{equation}\label{eq:sphere-product}
\Delta\langle dn_i,dv\rangle=\langle dn_i,d\Delta v\rangle-4n_i\Delta v.
\end{equation}
Differentiating equation~$(\ref{eq:sphere-test})$ and using equations~$(\ref{eq:sphere-v})$ and~$(\ref{eq:sphere-product})$, we obtain that
\begin{equation}\label{eq:sphere-poisson}
-\Delta e_i=n_i-\frac12n_i\Delta(\Delta+4)v=n_i\left(1+\frac12\Delta\phi\right).
\end{equation}
In particular, we have that $(-\Delta e_i)\hat\omega=n_i\omega$. Let $c_i$ be the mean value of $e_i$ with respect to $\omega$. By centering, the Cauchy--Schwarz inequality and the Poincar\'e inequality for $\omega$, we obtain that
\begin{equation}\label{eq:sphere-poincare}
Q(e_i)=\int_{\mathbb S^2}(e_i-c_i)n_i\omega\leq\left(\frac{Q(e_i)}{\lambda_1(\omega)}\int_{\mathbb S^2}n_i^2\omega\right)^{1/2}.
\end{equation}
For each $i=1,2,3$, the function $e_i$ is nonconstant by equation~$(\ref{eq:sphere-poisson})$, so we have that $Q(e_i)>0$. The coordinate functions satisfy
\begin{equation}\label{eq:coordinate-identities}
\sum_{i=1}^{3}n_i^2=1,\qquad \sum_{i=1}^{3}n_i\,dn_i=0.
\end{equation}
Squaring equation~$(\ref{eq:sphere-poincare})$, cancelling $Q(e_i)$ and summing over $i=1,2,3$, we obtain from equation~$(\ref{eq:coordinate-identities})$ that
\begin{equation}\label{eq:sphere-summed-poincare}
\lambda_1(\omega)\sum_{i=1}^{3}Q(e_i)\leq2\pi.
\end{equation}

It remains to estimate this sum of energies from below. By equations~$(\ref{eq:sphere-test})$, $(\ref{eq:sphere-poisson})$ and~$(\ref{eq:coordinate-identities})$, we obtain that
$$
\sum_{i=1}^{3}Q(e_i)=\int_{\mathbb S^2}\left(\frac14+\frac12\Delta v\right)\omega=\frac\pi2+\frac14\int_{\mathbb S^2}\Delta v\Delta\phi\hat\omega.
$$
Substituting equation~$(\ref{eq:sphere-v})$ in the last integral and integrating by parts, we obtain from equation~$(\ref{eq:sphere-positive})$ that
\begin{equation}\label{eq:sphere-energy-lower}
\sum_{i=1}^{3}Q(e_i)=\frac{2\pi+Q(\phi)}4+Q(\phi,v)\geq\frac{2\pi+Q(\phi)}4.
\end{equation}
Combining equations~$(\ref{eq:sphere-summed-poincare})$ and~$(\ref{eq:sphere-energy-lower})$ proves the first assertion. Since $L>0$, the potential $\phi$ is nonconstant, and hence we have that $Q(\phi)>0$. This proves the strict inequality.
\end{proof}

The same centering condition also gives a quantitative improvement of the lower bound for the sum of the first three reciprocal eigenvalues.

\begin{theorem}\label{thm:reciprocal-energy}
Under the same assumption as in Theorem~\ref{thm:sphere-energy} that $\omega$ is centered, it holds that
\begin{equation}\label{eq:reciprocal-centered}
\frac{1}{3}\sum_{j=1}^3\frac1{\lambda_j(\omega)}\geq\frac14+\frac{Q(\phi)}{40\pi}.
\end{equation}
\end{theorem}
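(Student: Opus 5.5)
The plan is to apply Proposition~\ref{prop:matrix} with the test functions $f_i:=n_i$, the coordinate functions. They have zero mean with respect to $\omega$ precisely by the centering hypothesis, and they are linearly independent. As in the proof of Theorem~\ref{thm:sphere-energy}, normalize $\int\phi\hat\omega=0$ and let $v$ and $e_i$ be defined by equations~(\ref{eq:sphere-v}) and~(\ref{eq:sphere-test}).

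The first step is to identify the Green functions. The identity $(-\Delta e_i)\hat\omega=n_i\omega$ from equation~(\ref{eq:sphere-poisson}) says $-\Delta_\omega e_i=n_i$. Hence $G_\omega n_i=e_i-c_i$, and so $B_{ij}=Q(e_i,e_j)$. The matrix $A_{ij}=\int n_in_j\omega$ satisfies $\operatorname{tr}A=\int\omega=2\pi$ by equation~(\ref{eq:coordinate-identities}). By equation~(\ref{eq:sphere-energy-lower}) we also have
$$
\operatorname{tr}B\geq\frac{2\pi+Q(\phi)}{4}+Q(\phi,v).
$$

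The second step is a matrix inequality that removes $A$. For positive definite $A,B$ with $\operatorname{tr}A=2\pi$, Cauchy--Schwarz gives
$$
\bigl(\operatorname{tr}\sqrt B\bigr)^2=\bigl(\operatorname{tr}(A^{1/2}\cdot A^{-1/2}\sqrt B)\bigr)^2\leq\operatorname{tr}(A)\operatorname{tr}(A^{-1}B).
$$
Hence $\operatorname{tr}(A^{-1}B)\geq(\operatorname{tr}\sqrt B)^2/(2\pi)$. Writing $\mu_1,\mu_2,\mu_3$ for the eigenvalues of $B$, we have
$$
\bigl(\operatorname{tr}\sqrt B\bigr)^2=3\operatorname{tr}B-\sum_{i<j}\bigl(\sqrt{\mu_i}-\sqrt{\mu_j}\bigr)^2.
$$
By equation~(\ref{eq:matrix-sum}), the claim~(\ref{eq:reciprocal-centered}) therefore reduces to
$$
3\operatorname{tr}B-\sum_{i<j}\bigl(\sqrt{\mu_i}-\sqrt{\mu_j}\bigr)^2\geq\frac{3\pi}{2}+\frac{3Q(\phi)}{20}.
$$
Given the lower bound on $\operatorname{tr}B$, it suffices to show that the anisotropy term is at most $\tfrac35Q(\phi)+3Q(\phi,v)$.

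The third step, which I expect to be the main obstacle, is controlling the anisotropy of $B$. I would expand $\phi$ and $v$ in spherical harmonics; the centering removes the degree-$1$ part. Then $Q(e_i,e_j)=\int e_in_j\omega$ splits as $\tfrac{\pi}{6}\delta_{ij}$ plus terms linear in the harmonic components, plus quadratic terms. By Schur-type orthogonality, products $n_in_j$ only see the degree-$0$ and degree-$2$ parts, so the trace-free part of $B$ linear in $\phi$ comes only from the degree-$2$ component $\phi_2$. The quadratic terms are handled using $Q(\phi,v)\geq8Q(v)$.

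I would then bound $\sum_{i<j}(\sqrt{\mu_i}-\sqrt{\mu_j})^2\leq\sum_{i<j}(\mu_i-\mu_j)^2/(\sqrt{\mu_i}+\sqrt{\mu_j})^2$. The Frobenius norm of the trace-free part of $B$ I would estimate by $\|\phi_2\|_{L^2}$ and the energy of $v$. Since $Q(\phi_2)=12\|\phi_2\|^2_{L^2(\hat\omega)}$, the constants should fit within the budget $\tfrac35Q(\phi)$. The loss from $8\pi$ (Theorem~\ref{thm:sphere-energy}) to $40\pi$ reflects exactly this slack.

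If the square-root denominators cause trouble when some $\mu_i$ is small, I would instead use convexity of $A\mapsto\operatorname{tr}(A^{-1}B)$ around $A_0=\tfrac{2\pi}{3}I$, via $\operatorname{tr}(A^{-1}B)\geq2\operatorname{tr}(A_0^{-1}B)-\operatorname{tr}(A_0^{-1}AA_0^{-1}B)$. I would then compute the correction directly in harmonics, again reducing to a degree-$2$ estimate. Finally, strict positivity of $Q(\phi)$ for nonconstant $\phi$ gives the strict improvement, as in Theorem~\ref{thm:sphere-energy}.
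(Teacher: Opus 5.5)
Your first two steps are correct. The identity $G_\omega n_i=e_i-c_i$, the values $\operatorname{tr}A=2\pi$ and $\operatorname{tr}B=\frac{2\pi+Q(\phi)}4+Q(\phi,v)$ (the latter is an identity), the Hilbert--Schmidt inequality $(\operatorname{tr}\sqrt B)^2\le\operatorname{tr}(A)\operatorname{tr}(A^{-1}B)$, and the reduction to $\sum_{i<j}(\sqrt{\mu_i}-\sqrt{\mu_j})^2\le\frac35Q(\phi)+3Q(\phi,v)$ all check out. The gap is that this reduced inequality is strictly stronger than what the theorem needs. Your third step, where the entire gain in $Q(\phi)$ must be produced, is not carried out, and its sketched bookkeeping is numerically wrong.

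Test it on $\phi$ of pure degree $2$, with $\rho=\frac12\Delta\phi=x^\top Wx$ and $W$ trace-free. Then $A=\frac{2\pi}3I+\frac{4\pi}{15}W$, $Q(\phi)=\frac{4\pi}{45}\|W\|_F^2$ and $Q(\phi,v)=\frac18Q(\phi)$. To first order $\sqrt{\mu_i}\approx\sqrt{3/(8\pi)}\,a_i$, where $a_i$ are the eigenvalues of $A$. So your anisotropy term is, to second order, $\frac3{8\pi}\sum_{i<j}(a_i-a_j)^2=\frac{2\pi}{25}\|W\|_F^2=\frac9{10}Q(\phi)$. This already exceeds the $\frac35Q(\phi)$ you allot to the linear degree-$2$ terms. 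The bound survives only because $3Q(\phi,v)=\frac38Q(\phi)$ here, which leaves a slack of just $\frac3{40}Q(\phi)$. For $\phi$ not small, the quadratic terms of $B$ are as large as the linear ones, and the $\mu_i$ can degenerate; for instance $B_{33}\to0$ when $\omega$ concentrates on the equator. A perturbative estimate of the trace-free part cannot close such a nearly sharp inequality.

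Your fallback fails outright. Write $A=\frac{2\pi}3(I+X)$, so $X=\frac25W$ in the example. Since $B\approx\frac\pi6I$, the tangent-line bound for $A^{-1}$ loses $\frac14\operatorname{tr}X^2=\frac1{25}\|W\|_F^2$ at second order. The entire gain from the degree-$\ge2$ part of $G_\omega n_i$ is only $\frac1{100}\|W\|_F^2$. You would end with $\frac34-\frac3{100}\|W\|_F^2$, which is below Hersch's constant.

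The missing idea is to split $G_\omega n_i$ into its degree-$1$ and degree-$\ge2$ parts before estimating anything. Since $Q(G_\omega n_i,n_j)=A_{ij}$ and $Q(n_j,n_k)=\frac{8\pi}3\delta_{jk}$, one has exactly $B=\frac3{8\pi}A^2+E$, where $E_{ij}=Q(\tilde e_i,\tilde e_j)$ and $\tilde e_i:=-G\Pi(n_i\rho)$. Hence, in a frame diagonalizing $A$, $\operatorname{tr}(A^{-1}B)=\frac34+\sum_iE_{ii}/a_i$. The anisotropy of $A$ cancels identically, and no Cauchy--Schwarz in $A$ is needed.

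The paper then bounds $\sum_iE_{ii}/a_i$ from below by a weighted Cauchy--Schwarz against $\Pi(n_i(3\phi+2\phi[2]))$. The pairings sum to $-\frac32Q(\phi)$ via $\sum_in_i\Pi(n_i\rho)=\rho-\frac25\rho[2]$. The other factor is controlled by the spherical-harmonic multiplier estimate $Q(\Pi(n_i(3f+2f[2])))\le15Q(f)$ together with $\sum_ia_i=2\pi$. This is what produces the coefficient $40\pi$.
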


\begin{proof}
\binoppenalty=10000\relpenalty=10000
Write $\omega=(1+\rho)\hat\omega$, so that $2\rho=\Delta\phi$. We normalize $\phi$ again to have zero mean on the round sphere, namely
$$
\frac{1}{2\pi}\int_{\mathbb S^2}\phi\hat\omega=0.
$$
By centering and equation~$(\ref{eq:sphere-orthogonality})$, both $\rho$ and $\phi$ are orthogonal to the constants and the first eigenspace of $-\Delta$. Let $\Pi$ be the $L^2$-orthogonal projection onto the space of spherical harmonics of degrees at least $2$ on the round sphere. Denote the degree-$\ell$ component of a smooth function $f\in C^\infty(\mathbb S^2)$ by $f[\ell]$.

We shall first verify that, for every $f\in C^\infty(\mathbb S^2)$ satisfying $\Pi (f)=f$, and each $i=1,2,3$, we have that
\begin{equation}\label{eq:reciprocal-multiplier}
Q(\Pi(n_i(3f+2f[2])))\leq15Q(f).
\end{equation}
By rotation, it suffices to consider $n_3$. Denote by $Y_\ell^m$ the $m$-th spherical harmonic of degree $\ell$. Here, we use the usual real spherical harmonics, orthonormal for the $L^2$ inner product induced by $\hat\omega$. Multiplication by $n_3$ satisfies
\begin{equation}\label{eq:harmonic-recurrence}
n_3Y_\ell^m=\alpha_\ell^mY_{\ell-1}^m+\alpha_{\ell+1}^mY_{\ell+1}^m,\qquad \alpha_\ell^m:=\sqrt{\frac{\ell^2-m^2}{4\ell^2-1}},
\end{equation}
where $-\ell\leq m\leq\ell$, and terms with zero coefficient are omitted. For every smooth function $h$, the product rule and integration by parts give
\begin{equation}\label{eq:coordinate-product-energy}
Q(n_3h)=\int_{\mathbb S^2}n_3^2(|dh|^2+4h^2)\hat\omega\leq Q(h)+4\int_{\mathbb S^2}h^2\hat\omega.
\end{equation}
Let $g$ be the sum of the odd-degree components of $f$. Since the function $g$ contains only degrees at least $3$, equations~$(\ref{eq:round})$ and~$(\ref{eq:coordinate-product-energy})$ give
\begin{equation}\label{eq:odd-multiplier}
Q(\Pi(n_3g))\leq Q(n_3g)\leq\frac76Q(g).
\end{equation}

For the even-degree components, put $h:=f-g-f[2]-f[4]$, so that the function $h$ contains only even degrees at least $6$. For simplicity write $$a:=\sqrt{Q(f[2])},\qquad b:=\sqrt{Q(f[4])},\qquad c:=\sqrt{Q(h)}.$$ Since the coefficients $\alpha_\ell^m$ in equation~$(\ref{eq:harmonic-recurrence})$ are largest when $m=0$, we obtain that
\begin{equation}\label{eq:degree-2&4-energy}
Q(\Pi(n_3f[2]))\leq\frac{18}{35}a^2,\qquad Q(\Pi(n_3f[4]))\leq\frac{409}{770}b^2.
\end{equation}
By equations~$(\ref{eq:round})$ and~$(\ref{eq:coordinate-product-energy})$, we also have that
\begin{equation}\label{eq:higher-even-energy}
Q(\Pi(n_3h))\leq Q(n_3h)\leq\frac{22}{21}c^2.
\end{equation}
Notice that, the smooth functions $n_3f[2]$ and $n_3f[4]$ have only degree $3$ in common after projection by $\Pi$. Moreover, the smooth functions $n_3f[4]$ and $n_3h$ have only degree $5$ in common, and the products $n_3f[2]$ and $n_3h$ are orthogonal. Thus, equation~$(\ref{eq:harmonic-recurrence})$ and the Cauchy--Schwarz inequality give that
\begin{equation}\label{eq:even-cross-two-four}
|Q(\Pi(n_3f[2]),\Pi(n_3f[4]))|\leq\frac{4\sqrt6}{35}ab
\end{equation}
and that
\begin{equation}\label{eq:even-cross-four-six}
|Q(\Pi(n_3f[4]),\Pi(n_3h))|\leq\frac{bc}{\sqrt{14}}.
\end{equation}
Using equations~$(\ref{eq:degree-2&4-energy})$--$(\ref{eq:even-cross-four-six})$ and the bounds $409/770\leq3/5$ and $22/21\leq7/6$, we obtain that
\begin{equation}\label{eq:even-multiplier-bound}
\frac19Q(\Pi(n_3(5f[2]+3f[4]+3h)))\leq\frac{10}{7}a^2+\frac35b^2+\frac76c^2+\frac{8\sqrt6}{21}ab+\frac{\sqrt{14}}7bc.
\end{equation}
By virtue of the elementary inequalities
$$
\frac{8\sqrt6}{21}ab\leq\frac5{21}a^2+\frac{32}{35}b^2,\qquad \frac{\sqrt{14}}7bc\leq\frac17b^2+\frac12c^2,
$$
we conclude from equation~$(\ref{eq:even-multiplier-bound})$ that
\begin{equation}\label{eq:even-multiplier}
Q(\Pi(n_3(5f[2]+3f[4]+3h)))\leq15(a^2+b^2+c^2).
\end{equation}
Since multiplication by $n_3$ exchanges odd and even degrees, the two contributions remain orthogonal. Applying equation~$(\ref{eq:odd-multiplier})$ to $3g$ and adding equation~$(\ref{eq:even-multiplier})$, we obtain equation~$(\ref{eq:reciprocal-multiplier})$.

We now apply Proposition~\ref{prop:matrix} to $f_i:=\beta n_i$, for $i=1,2,3$, where
$$
\beta:=\sqrt{\frac3{8\pi}}.
$$
As in Proposition~\ref{prop:matrix}, we define matrices $A,B$ by
$$
A_{ij}:=\int_{\mathbb S^2}f_if_j\omega,
\qquad
B_{ij}:=Q(G_\omega f_i,G_\omega f_j).
$$
By centering, the functions $f_1,f_2,f_3$ have zero mean integral with respect to $\omega$. These functions are orthonormal with respect to $Q$, and we have that
\begin{equation}\label{eq:coordinate-trace}
\operatorname{tr}(A)=\sum_{i=1}^{3}\int_{\mathbb S^2}f_i^2\omega=\frac34.
\end{equation}
Up to a rotation, we may assume that $A$ is diagonal, with $A_{ii}=\alpha_i>0$.

For $i=1,2,3$, we put
$$
e_i:=-G\circ\Pi(n_i\rho),
$$
where the inverse $G:=\Delta^{-1}$ is taken on functions of zero mean on the round $2$-sphere. We also write $\varphi:=3\phi+2\phi[2]$. By integration by parts, we have that $Q(G_\omega f_i,f_j)=A_{ij}$. For every smooth function $v$ satisfying $\Pi(v)=v$, we also obtain that
$$
Q(G_\omega f_i,v)=\beta\int_{\mathbb S^2}n_i\rho v\hat\omega=\beta Q(e_i,v).
$$
Consequently, the smooth function $G_\omega f_i-\alpha_if_i-\beta e_i$ is constant. By orthogonality, we have that $B_{ii}=\alpha_i^2+\beta^2Q(e_i)$, and hence equation~$(\ref{eq:coordinate-trace})$ gives
\begin{equation}\label{eq:reciprocal-matrix-trace}
\operatorname{tr}(A^{-1}B)=\frac34+\frac3{8\pi}\sum_{i=1}^{3}\frac{Q(e_i)}{\alpha_i}.
\end{equation}
It remains to bound the sum in equation~$(\ref{eq:reciprocal-matrix-trace})$ from below in terms of $Q(\phi)$.

To evaluate the pairings with $\Pi(n_i\varphi)$, write $\rho[2](x)=x^\top Wx$ on the unit sphere, where $W$ is a $3\times3$ real symmetric matrix with zero trace. The decomposition of the cubic polynomial $x_i x^\top Wx$ gives
\begin{equation}\label{eq:degree-one-projection}
(I-\Pi)(n_i\rho)=\frac25(Wx)_i.
\end{equation}
Indeed, the polynomial $$x_i x^\top Wx-\frac{2}{5}(x_1^2+x_2^2+x_3^2)(Wx)_i$$ is harmonic of degree $3$, and the higher-degree components of $\rho$ contribute no terms of degrees $0$ or $1$. Multiplying equation~$(\ref{eq:degree-one-projection})$ by $n_i$ and summing, we obtain that
\begin{equation}\label{eq:coordinate-projection}
\sum_{i=1}^{3}n_i\Pi(n_i\rho)=\rho-\frac25\rho[2].
\end{equation}
The projection onto degree $2$ commutes with $\Delta$. Hence, by $2\rho=\Delta\phi$, orthogonality and equation~$(\ref{eq:coordinate-projection})$, we obtain that
\begin{equation}\label{eq:reciprocal-pairing}
\sum_{i=1}^{3}Q(e_i,\Pi(n_i\varphi))=\int_{\mathbb S^2}\left(\rho-\frac25\rho[2]\right)\varphi\hat\omega=3\int_{\mathbb S^2}\rho\phi\hat\omega=-\frac32Q(\phi).
\end{equation}
By the Cauchy--Schwarz inequality and equations~$(\ref{eq:reciprocal-multiplier})$, $(\ref{eq:coordinate-trace})$ and~$(\ref{eq:reciprocal-pairing})$, we conclude that
\begin{equation}\label{eq:reciprocal-cauchy}
\frac14Q(\phi)^2\leq\frac19\sum_{i=1}^{3}\frac{Q(e_i)}{\alpha_i}\sum_{j=1}^{3}Q(\Pi(n_j\varphi))\alpha_j\leq\frac54Q(\phi)\sum_{i=1}^{3}\frac{Q(e_i)}{\alpha_i}.
\end{equation}
Since $Q(\phi)>0$, cancelling $Q(\phi)$ in equation~$(\ref{eq:reciprocal-cauchy})$ and substituting the resulting bound into equation~$(\ref{eq:reciprocal-matrix-trace})$, we obtain equation~$(\ref{eq:reciprocal-centered})$ by Proposition~\ref{prop:matrix}.
\end{proof}

\section{Concluding Remarks}
We have obtained spectral comparisons from the oscillation, the first derivatives and the Laplacian of a potential relating two K\"ahler forms of equal area. The common Dirichlet form and the min--max principle give bounds for reciprocal eigenvalues, their fractional powers and eigenvalue ratios at every positive index.

The estimates also describe spectral stability under changes of potential. For a fixed reference form, bounded oscillation gives a positive lower bound for each positive eigenvalue. As the oscillation tends to zero, the corresponding eigenvalue ratios tend to one. More generally, uniform convergence of potentials, up to additive constants, gives convergence of each positive eigenvalue whenever the limiting form is smooth and positive. These conclusions follow directly from Theorem~\ref{thm:p} and Corollary~\ref{cor:interval}.

For the sphere, the explicit reference spectrum gives bounds for individual eigenvalues, the reciprocal sum and the counting function. Under the centering condition, Theorems~\ref{thm:sphere-energy} and~\ref{thm:reciprocal-energy} give strict improvements of Hersch's inequalities in terms of the Dirichlet energy of a nonconstant potential. For holomorphically immersed curves, the oscillation of the ambient potential gives a common comparison for all induced metrics.
\section*{Acknowledgements}

The author would like to express his most sincere gratitude to Alexei Viktorovich Penskoi, who taught the author complex manifolds and spectral geometry.



\section*{Statements and Declarations}

\noindent\textbf{Funding.} No funding was received to assist with the preparation of this manuscript.

\noindent\textbf{Competing interests.} The author declares no relevant financial or non-financial interests.

\noindent\textbf{Data availability.} Data sharing is not applicable to this article.

\noindent\textbf{Use of generative AI.} During the preparation of this manuscript, the author used OpenAI’s ChatGPT to help handle minor details and grammatical issues. All suggested changes were reviewed by the author, who takes full responsibility for the mathematical accuracy and the final content of this article.

\end{document}